\title[Variation of Hodge Structure and Hodge modules]{Variation of Hodge Structure and Hodge modules}
\author{Mohammad Reza Rahmati\\ }
\thanks{}
\address{
\hfill\break 
\hfill\break \\
\hfill\break }
\email{mrahmati@cimat.mx}
\newcommand{\comments}[1]{}
\def \Q{{\mathbb Q}}
\newtheorem{theorem}{Theorem}[section]
\newtheorem{proposition}[theorem]{Proposition}
\newtheorem{corollary}[theorem]{Corollary}
\newtheorem{lemma}[theorem]{Lemma}
\newtheorem{definition}[theorem]{Definition}
\newtheorem{remark}[theorem]{Remark}
\newtheorem{example}[theorem]{Example}
\keywords{Polarized Variation of mixed Hodge structure, Mixed Hodge module, Perverse sheaf, de Rham functor}
\subjclass{14D07}
\begin{document}

\begin{abstract}
This text is an expository survey on the interplay between polarized variation of Hodge structure (PVHS) and the formalism of Hodge modules. 
We specifically review the extensions of a PVMHS over their singularities and its relation to mixed Hodge modules. 
\end{abstract}

\maketitle


\section*{Introduction}

\vspace{0.5cm}

The asymptotic behaviour of a Hodge structure was first studied by W. Schmid, \cite{SCH} and J. Steenbrink, \cite{ST1} to define 
limit mixed Hodge structure.  Classically, MHS's appear as solutions systems to regular holonomic flat $D$-modules, or equivalently a perverse sheaf. This is the content of Riemann-Hilbert correspondence. 

\begin{theorem}(Riemann-Hilbert (RH) correspondence) \cite{PS} There exists a functorial correspondence

\[ \mathcal{R}Hom_D(M,\mathcal{O}_X):D_{rh}^b(X) \to D^b(X,\mathbb{C}) \] 

is an equivalence of categories. 
\end{theorem}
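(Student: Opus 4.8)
The plan is to prove the asserted equivalence in three stages — well-definedness, full faithfulness, essential surjectivity — with the target read as $D^b_c(X,\mathbb{C})$, the subcategory of $D^b(X,\mathbb{C})$ of complexes with $\mathbb{C}$-constructible cohomology, and the source as the bounded derived category of $D_X$-modules with regular holonomic cohomology. First I would verify that $M\mapsto\mathcal{R}\mathrm{Hom}_D(M,\mathcal{O}_X)$ indeed lands in $D^b_c(X,\mathbb{C})$: by Kashiwara's constructibility theorem there is a Whitney stratification of $X$ adapted to the characteristic variety $\mathrm{Ch}(M)$ — a Lagrangian subvariety of $T^\ast X$ since $M$ is holonomic — along which every cohomology sheaf $\mathcal{H}^k\mathcal{R}\mathrm{Hom}_D(M,\mathcal{O}_X)$ is locally constant of finite rank, with finite amplitude coming from holonomicity. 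One also records $t$-exactness: after the shift and duality relating the solution functor to the de Rham functor $\mathrm{DR}(M)=\Omega_X^\bullet\otimes_{\mathcal{O}_X}M[\dim X]$, the functor is $t$-exact for the natural $t$-structure on the source and the perverse $t$-structure on the target, which one checks stratum by stratum.

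Next, full faithfulness. I would reduce by dévissage on the dimension of support to two model situations. Using the six operations — proper pushforward, the adjunctions $j_!,j_\ast$ along the open embedding of a dense stratum, and Kashiwara's equivalence identifying $D_X$-modules supported on a closed submanifold $Z$ with $D_Z$-modules — every object of the source is built by iterated triangles from intermediate extensions $j_{!\ast}(V,\nabla)$ of regular integrable connections on smooth locally closed strata. Since $\mathcal{R}\mathrm{Hom}_D(-,\mathcal{O}_X)$ is exact and intertwines all these operations with their topological counterparts (this compatibility being the bulk of the Riemann--Hilbert package), it suffices to check that the canonical map
\[
\mathcal{R}\mathrm{Hom}_{D_X}(M,N)\ \longrightarrow\ \mathcal{R}\mathrm{Hom}_{\mathbb{C}}\bigl(\mathcal{R}\mathrm{Hom}_D(N,\mathcal{O}_X),\mathcal{R}\mathrm{Hom}_D(M,\mathcal{O}_X)\bigr)
\]
is an isomorphism when $M$ and $N$ are both supported at a point — where $D$-modules on a point are finite-dimensional vector spaces and the claim is immediate — or are such minimal extensions, where it follows from Deligne's comparison theorem for regular meromorphic connections together with the local analysis of the Malgrange--Kashiwara $V$-filtration along a normal-crossings resolution of the boundary. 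Regularity is essential here: it is exactly what guarantees that no solutions are lost in passing between the meromorphic and the analytic pictures, so that $N$ is reconstructible from $\mathrm{DR}(N)$.

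For essential surjectivity I would again induct on the dimension of support. A simple perverse sheaf is an intersection complex $IC_{\overline{Z}}(\mathcal{L})$ of an irreducible local system $\mathcal{L}$ on a smooth connected locally closed $Z\subset X$; the classical Deligne correspondence on $Z$, carried across a normal-crossings compactification, produces a regular integrable connection $(V,\nabla)$ with solution sheaf $\mathcal{L}$, and one takes its minimal $D_X$-module extension $j_{!\ast}(V,\nabla)$, built from the $V$-filtration, and checks via the stratumwise computation above that its solution complex is $IC_{\overline{Z}}(\mathcal{L})$. Because the category of perverse sheaves is artinian and noetherian and $\mathcal{R}\mathrm{Hom}_D(-,\mathcal{O}_X)$ is exact, an arbitrary perverse sheaf is an iterated extension of simple ones, each extension class being realized on the $D$-module side by the full faithfulness just proved; an induction on cohomological length then lifts this to all of $D^b_c(X,\mathbb{C})$.

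The main obstacle is the local analytic input at the boundary: matching the topological nearby and vanishing cycle functors $\psi,\phi$ with the graded pieces of the $V$-filtration, and proving that a regular holonomic $D$-module is reconstructible from its de Rham complex. Put differently, the technical heart is the Kashiwara--Mebkhout regularity theorem — for regular holonomic $M$ the adjunction-type morphism relating $M$ to the solution of its own de Rham complex is an isomorphism — together with the stability of the regular holonomic class under the six operations, without which the dévissage above would not be legitimate. The remainder is bookkeeping with distinguished triangles and $t$-structures.
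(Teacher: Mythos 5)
The paper offers no proof of this theorem: it is quoted from the literature and cited to \cite{PS}, where it is in turn attributed to Kashiwara and Mebkhout, so there is no in-paper argument to set against yours. Your outline is the standard d\'evissage proof and is structurally sound: you correctly repair the statement by taking the target to be $D^b_c(X,\mathbb{C})$ rather than all of $D^b(X,\mathbb{C})$, you get constructibility from Kashiwara's theorem via a Whitney stratification adapted to the Lagrangian characteristic variety, perverse $t$-exactness of the shifted solution/de Rham functor, full faithfulness by reduction through Kashiwara's equivalence and the open--closed triangles to regular meromorphic connections where Deligne's comparison theorem applies, and essential surjectivity by induction on the dimension of support and on length in the artinian category of perverse sheaves. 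Two caveats. First, for the triangulated d\'evissage it is cleaner to generate the source category by direct images $j_{*}(V,\nabla)$ (or $j_{!}$) of regular connections on locally closed smooth strata rather than by the intermediate extensions $j_{!*}(V,\nabla)$, which are only the simple objects of the abelian heart; your length induction in the heart recovers what you need, but the two reductions should not be conflated. Second, all of the genuine content --- Deligne's Riemann--Hilbert correspondence for regular meromorphic connections, the Kashiwara--Mebkhout reconstruction theorem for regular holonomic modules, and the stability of the regular holonomic class under the six operations --- is invoked rather than proved, so what you have is an accurate map of the proof rather than a proof. That is a reasonable level of detail for a theorem of this depth, and it is consistent with the paper's own treatment, which simply cites the result; but the sketch should be presented as a reduction to those three inputs, not as a self-contained argument.
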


A sheaf in $D^b(X,\mathbb{C})$ is called \textit{perverse} if it is isomorphic to $\mathcal{R}\text{Hom}_D(M,\mathcal{O}_X)$ or the solution module of some regular holonomic $M$. Suppose $M$ is a $D_X$-module. The sheaf $\text{Hom}_D(M,\mathcal{O}_X)$ is called the solution module of $M$. The derived functors $\mathcal{R}\text{Hom}_D(M,\mathcal{O}_X)$ are called higher solution module of $M$.

A $D$-module on an algebraic manifold $X$ is nothing other than a sheaf of $\mathcal{O}_X$-module with an extra $\mathbb{C}$-linear connection satisfying the Leibnitz rule. It would be the same to regard them as usual sheaves on cotangent bundle of $X$. The (proper)  pull-back and push-forward of $D$-modules are defined via correspondences. This roughly speaking tells, it is the same as usual sheaf theory on cotangent bundles. The category of $D$-modules is equipped with the basic (Grothendieck 6-functor) sheaf theoretic operations on $D$-modules. 

The $D$-modules under consideration in the text have geometric origins from Hodge theory, that is they are doubly filtered $(W,F)$, they are quasi-unipotent and the three filtration $W,F$ and the Malgrange-Kashiwara $V$-filtration are compatible. We will consider the extensions of MHS's as the corresponding solution to the extended $D$-module, which is defined by gluing of vector bundles with connection. 

Mixed Hodge modules are defined as the extensions of pure Hodge modules. 
A Hodge module always underlies a polarized variation of Hodge structure \textit{defined on a Zariski dense open subset} of the ambient space. In the theory of perverse sheaves, extensions along open strata of stratifications of complex manifolds is being done by the Deligne nearby functor, denoted $\Psi$ (notations $\psi_t,\ \psi_f$ are also used), by requiring a compatibility identity via restrictions.
The extension over a closed subvariety is explained by complexes of vector spaces via nearby and vanishing cycles maps. It is based on the question, how to describe the restriction of a vector bundle $M$ to a closed $i:D \hookrightarrow X$ and an open complement $ \ j:U \hookrightarrow X$, such that the original vector bundle becomes a pull back of the gluing of the two. 

In a more modern language, it is described as a $t$-structure on the derived categories of mixed Hodge modules and perverse sheaves, given by the exact triangles in the derived category of perverse sheaves

\[ i^*i_* \rightarrow 1 \rightarrow i^!i_* , \qquad j^*j_* \rightarrow 1 \rightarrow j^*j_! \]

\vspace{0.3cm}

\noindent
Here $j_!$ and $i_!$ are extensions by $0$, $j^! $ is restriction and $i^!$ means sections supported in the closed subset. The above isomorphisms define distinguished exact triangles in the derived category of perverse sheaves that explain the gluing process mutually. 

The nearby and vanishing cycle functors fit in the important triangle

\[ i^* \to \psi_f \to \phi_f \stackrel{-1}{\rightarrow} \]

in the derived category of perverse sheaves, where the first map is induced by adjunction, $f$ gives a local equation of degeneracy locus. The important fact is that these two functors do carry perverse sheaves to perverse sheaves (proved by Deligne). The associated long exact cohomology sequence becomes

\[ .. \to H^i(\psi_f) \to H^i(\phi_f) \to H^{i+1}(B \cap X_0) \to...\]

It follows that vanishing cycles are homology classes that are killed in $H^i(\psi_f)$, via the specialization (contraction) map $X_t \to X_0$. Moreover, the nearby functor would have a decomposition as $ \psi_f=\psi_f^{un} \oplus \psi_f^{\ne 1} ,\ \phi_f=\phi_f^{un} \oplus \phi_f^{\ne 1} $ where by any choice of a generator $T \in \pi_1(\Delta^*)$, $1-T$ is nilpotent on $\psi_f^{un}$. We have the following well-known exact triangles,

\[ i^*j_* \to \psi_f^{un} \stackrel{1-T}{\longrightarrow} \psi_f^{un} \to \ \ , \qquad i^* \to \psi_f^{un}j^* \stackrel{1-T}{\longrightarrow} \phi_f^{un} \to \]

such that $v \circ u=1-T$. There are homomorphisms $u$ and $v$ as 

\[ \psi_fM \stackrel{u}{\rightarrow} \phi_fM \stackrel{v}{\rightarrow} \psi_fM, \qquad v \circ u=(N=\log T_{u}) \otimes -1/2 \pi i \] 

The $D$-modules we will consider would have origins in Hodge theory, namely mixed Hodge modules, then they would automatically be regular holonomic, and we will assume they are also quasi-unipotent. Holonomicity of a filtered $D$-module $(M,F)$, means that $F$ is a good filtration of $M$ and 

\vspace{0.2cm}

\[ \dim \sqrt{\text{ann}_{Gr_F(D_X)}Gr_FM}=\dim(X) \]

which is the minimum number may be attained. The zero set of the ideal under the square defines a sub-variety of the cotangent bundle of $X$, namely characteristic variety. The above equation says this variety is a Lagrangian sub-variety of the cotangent bundle of $X$, i.e the symplectic form of $T^*X$ vanishes on this subvariety. If $X$ is smooth, a MHM on $X$, is always regular holonomic and is given by a 4-tuple $(M,F,K,W)$, where $W$ describes both of the weight filtration of $M$ and $K=\text{rat}(M)$. Then, a morphism is a pair of morphisms compatible with $\text{rat}$ and filtrations. A basic example is given by cohomology along fibers in a local fibration $f:X \to \Delta$ with $D=f^{-1}(0)$ a normal crossing divisor. It leads to the following diagram 

\begin{equation}
\begin{CD}
X_{\infty} @>>> U @>>> X @<<< E \\
@Vf_{\infty}VV @VVfV @VVfV @VVV\\
H @>e>> \Delta^* @>>> \Delta @<<< 0 
\end{CD}
\end{equation}

namely Specialization diagram. $X_{\infty}=X \times_{\Delta^*} H$ is called the canonical fiber. The Riemann-Hilbert correspondence guarantees the desired mixed Hodge modules as the filtered Guass-Manin system.

\section{Hodge Modules}

Let $X$ be a complex algebraic variety. When $X$ is smooth, then a mixed Hodge module on $X$ determines a 4-tuple $(M,F,K,W)$ where $M$ is a holonomic $D$-module with a \textit{good} filtration $F$ and, with rational structure $\text{DR}(M) \cong \mathbb{C} \otimes K \in \text{Perv}(\mathbb{C}_X)$, for a perverse sheaf $K$, and $W$ is a pair of weight filtrations on $M$ and $K$ compatible with $\text{rat}$ functor. $\text{DR}$ denotes the \textit{de Rham functor} shifted by the $\dim(X)$. The de Rham functor is dual to the solution functor. Denote by $MHM(X)$, the abelian category of Mixed Hodge Modules on $X$. $MHM(X)$ is equipped with a forgetful functor 

\[ \text{rat}:MHM(X) \to \text{Perv}(\mathbb{Q}_X) \], 

which assigns the underlying perverse sheaf/$\mathbb{Q}$. Sometimes the above concepts would be understood as elements in $D^bMHM(X)$ and $D_c^b(\mathbb{Q}_X)$ respectively, and the same for the functor $\text{rat}$. \textit{If $X=pt$, Then, $MHM(pt)$ is exactly all the polarizable mixed Hodge structures.}

A MHM has a weight filtration $W$, and we say it is \textit{pure of weight $n$}, if $Gr_k^W=0$ for $k \ne n$. Normally, the filtration $W$ is involved with a nilpotent operator on $M$ or the underlying variation of a mixed Hodge structure. 

A mixed Hodge modules (def.) is obtained by successive extensions of pure one. If the support of a pure Hodge module as a sheaf is irreducible such that no sub or quotient module has smaller support, then we say the module has \textit{strict support}. Any pure Hodge module will have a unique decomposition into pure modules with different strict supports, known as \textit{Decomposition Theorem}. A pure Hodge module is also called \textit{polarizable HM}. $MH_Z(X,n)^p$ will denote the category of pure Hodge modules with strict support $Z$. 

\textit{An $M \in HM_Z(X,n)$ determines a polarizable variation of Hodge structure. The converse of this fact is also true, that variation of Hodge structures determine a MHM}, \cite{SAI2}. Thus;

\begin{equation}
MH_Z(X,n)^p \simeq VHS_{gen}(Z,n-\dim Z)^p
\end{equation}

The right side means polarizable variations of Hodge structure of weight $n-\dim Z$ defined on a non-empty smooth sub-variety of $Z$.

The standard operations on the categories of sheaves can also be defined for $MHM(X)$. Here we have two additional operations namely Deligne nearby functor $\psi_f$ and the vanishing cycle functor $\phi_f$ along the fibers of $f \in \Gamma(X, \mathcal{O}_X)$, which fit into an exact triangle,

\begin{center} 
$i^{-1} \to \psi_f \stackrel{\text{can}}{\rightarrow} \phi_f \stackrel{[1]}{\rightarrow}... $\\
$\psi_f F=i^{-1}Rj_*j^{-1} F, \qquad \phi_f F=Cone(i^{-1}F \to \psi_f F) $
\end{center}
 
where $i:X_0 \hookrightarrow X$, $j:X\setminus X_0 \hookrightarrow X$. The above distinguished triangle can be considered as the definition of $\phi_f$, namely vanishing functor along $f$. The vanishing cycle functor is the mapping cone of the adjunction morphism $i^{-1}F \to \Psi_f F$. Thus we have a diagram 

\begin{equation}
\begin{CD}
i^*F @>>> \psi_*F @>can>> \phi_*F @>>>i^*F[-1] \\
@VVV @VVT-IV @VVvarV @VVV \\
0 @>>> \psi_*F @>=>>\psi_*F @>>>0
\end{CD}
\end{equation}

Assume $X$ is a local complete intersection and $\dim(X)=n+1$. Then $\mathbb{Q}_X[n+1]$ is a perverse sheaf. Denote $\psi_f\mathbb{Q}_X, \ \phi_f\mathbb{Q}_X$, be the nearby and vanishing cycle complexes on $X_0=f^{-1}(0)$. It is known that $\psi_f\mathbb{Q}_X[n], \ \phi_f\mathbb{Q}_X[n]$ are perverse. Then 
\[ \psi_{f,\lambda}\mathbb{Q}_X=\ker(T_s-\lambda), \qquad \phi_{f,1}\mathbb{Q}_X=\ker(T_s-id) \]

and $\phi_{f,\lambda}=\psi_{f,\lambda}$ for $\lambda \ne 1$. We know that

\[ H^j(F_x,\mathbb{Q})_{\lambda}:=H^j(\psi_{f,\lambda}\mathbb{Q}_X), \qquad \tilde{H}^j(F_x,\mathbb{Q})_{\lambda}:=H^j(\phi_{f,\lambda}\mathbb{Q}_X) \]

there is a short exact sequence

\[ 0 \to \tilde{H}^n(F_x, \mathbb{Q}) \to H^n(L_x,\phi_f \mathbb{Q}_X) \to K_x \to 0 \]

where $L_x$ is the link and $K_x$ is the kernel of the natural morphism, 

\[ \beta_{\phi}:H_c^n(F_x, \mathbb{Q})(-1) \to H^n(F_x, \mathbb{Q}) , \]

In the above $\beta_{\phi}$ is simply induced from the natural map $i^! \to i^*$. The reduced cohomology $\tilde{H}^j(F_x,\mathbb{Q})$ is sometimes refereed as vanishing cohomology. The sheaf $\phi_f\mathbb{Q}$ introduced by Deligne is a sheaf supported on $X_0$ whose cohomology calculates the vanishing cohomology. In the isolated hypersurface singularity case we have 

\begin{center}
$\tilde{H}^j(F_x,\mathbb{Q})=0, \qquad j<n$ 
\end{center}

that is equivalent to the perversity. The above short exact sequence can be interpreted as a relation between the cohomology of the milnor fiber and that of the link of singularity, \cite{DS}. The relation with monodromy is reflected in the Wang sequence

\[ \to H^j(L_x \setminus X_0) \to H^j(F_x)_1 \stackrel{N}{\rightarrow} H^j(F_x)_1(1) \to H^{j+1}(L_x \setminus X_0) \to ... \]

When $U$ is the complement of a normal crossing divisor $D \subset X$, $\mathcal{V}$ a local system on $U$ underlying a polarized pure Hodge structure of weight $n$, say $V$; such that the local monodromies around $U$ are quasi-unipotent, then there exists a unique Hodge module $V_X^{Hdg}$ of weight $(n+\dim X)$ having strict support $X$ and restricting to $V^{Hdg}$. The intersection complex 

\begin{center}
$IC_X^{\bullet} \mathcal{V}=j_{!_*} \mathcal{V}, \qquad j_{*!}:=\text{image}(j_! \to j_*)$ 
\end{center}

is the unique perverse extension of $\mathcal{V}[d]$ with strict support $X$. Here $j_{*!}:=\text{image}(j_! \to j_*)$ is the intermediate extension originally belonged to Deligne. Therefore, 

\[ V_X^{Hdg}=j_{!_*} \mathcal{V} \]  

The functor $\Psi_f$ defined before is a  special case of $j_{!*}$. Specifically, $j_{!*}$ is the result of applying $\Psi$ inductively along open strata of a startified manifold. 

On a stratified pseudo-manifold $X$ of $\dim(X)=n$, the intersection complex may be defined inductively, along the strata, starting from a constant sheaf $\mathbb{R}$, Using the Deligne extension $\Psi$ we described above. The resulting complex is the intersection complex

\[ \text{IC}_X^{\bullet}=\tau_{\leq \bar{p}(n)-n}Ri_{n*}...\tau_{\leq \bar{p}(2)-n}Ri_{2*}\mathbb{R}_{X-\Sigma}[n] \]

where $\bar{p}=\{\bar{p}(2),...,\bar{p}(n)\}$ is the perversity, $\tau$ is trunctation of the complex, and $i_k:U_k \hookrightarrow U_{k+1}, \ U_k=X-X_{n-k}, \ X-\Sigma=U_2$, \cite{B}. 

\begin{remark} 
In a simple extension of the local system $\mathcal{H} \to \Delta^*$ associated to the Milnor fiberation of $f:\mathbb{C}^{n+1} \to \mathbb{C}$, we have 

\[ j_{!_*}\mathcal{H} := \{\  \displaystyle{\sum_{\alpha,l}f^{\alpha}\exp(\dfrac{-N}{2 \pi i}\log f)m_{\alpha,l} \ } \} \]
\end{remark}

\begin{theorem}\cite{AR}
Let $U$ be the complement of a normal crossing divisor in a compact Kahler manifold $X$. Then intersection cohomology with coefficient in a polarized VHS on $U$ is isomorphic to $L^2$ cohomology for a suitable complete Kahler metric on $U$. 
\end{theorem}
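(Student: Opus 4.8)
The plan is to follow the method by which the one–variable case (Zucker) and its several–variable refinement (Cattani–Kaplan–Schmid, Kashiwara–Kawai) are proved: produce an explicit complete Kähler metric, sheafify the $L^2$ de Rham complex, and identify its cohomology sheaves with the stalks of $IC_X^\bullet\mathcal{V}$ by a local computation near the divisor. First I would fix the metric. In local coordinates $(t_1,\dots,t_k,t_{k+1},\dots,t_n)$ on $X$ with $D=\{t_1\cdots t_k=0\}$, take on $U$ the metric that is locally quasi–isometric to the product of Poincaré metrics $|dt_i|^2/(|t_i|^2(\log|t_i|)^2)$ on the punctured factors and a smooth Kähler metric in the remaining coordinates; patch these local models against a fixed Kähler metric on $X$ with a partition of unity. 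One checks this metric is Kähler and complete. Equip $\mathcal{V}$ with the Hodge metric $h$ coming from the polarization, and let $H^\bullet_{(2)}(U,\mathcal{V})$ be the cohomology of the complex of $\mathcal{V}$–valued forms $\alpha$ with $\alpha$ and $d\alpha$ both $L^2$.

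Next I would sheafify: let $\mathcal{A}_{(2)}^\bullet$ be the complex of sheaves on $X$ whose sections over an open $\Omega$ are the $L^2$ forms on $\Omega\cap U$ (for the metric above and $h$). This complex is fine, since it admits $C^\infty$ partitions of unity, so its hypercohomology over $X$ is $H^\bullet_{(2)}(U,\mathcal{V})$. Hence it suffices to exhibit a quasi–isomorphism $\mathcal{A}_{(2)}^\bullet \simeq IC_X^\bullet\mathcal{V}$. This is a local statement at a point $x$ in a stratum of $D$; restricting to a small polydisk and using the local product structure of the metric reduces it to computing the $L^2$–cohomology of $(\Delta^*)^k\times\Delta^{n-k}$, Poincaré$\,\times\,$Euclidean metric, with the local system whose commuting (quasi-)unipotent monodromies are $T_i=\exp N_i$. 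Here one invokes the Hodge norm estimates: by Schmid's nilpotent– and $SL_2$–orbit theorems in one variable and the several–variable $SL_2$–orbit theorem of Cattani–Kaplan–Schmid, for a flat multivalued section $v$ the norm $h(v,v)$ grows like a product of powers of $\log|t_i|^{-1}$ with exponents governed by the relative weight filtration of $N_1+\cdots+N_k$. Feeding these estimates into the Koszul complex computing $L^2$–cohomology on a punctured disk, and inducting on $k$, shows that the $L^2$ condition selects exactly the sections lying in the weight subspace cut out by the $N_i$ — which is precisely the recipe for the stalk of $j_{!*}\mathcal{V}$ at $x$, matching the inductive truncation $\tau_{\le\bar p(\cdot)}$ that defines $IC_X^\bullet\mathcal{V}$ as recalled above. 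So the cohomology sheaves coincide in all degrees.

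Finally I would identify the comparison morphism: the natural map $IC_X^\bullet\mathcal{V}\hookrightarrow j_*\mathcal{V}[n]\to\mathcal{A}_{(2)}^\bullet$ realizes, on cohomology sheaves, the isomorphism just computed, hence is a quasi–isomorphism; taking hypercohomology over the compact manifold $X$ (where everything is finite–dimensional) yields $IH^\bullet(U,\mathcal{V})\cong H^\bullet_{(2)}(U,\mathcal{V})$. The Kähler hypothesis is used in the local analysis — the model metric is Kähler, which is what makes the Hodge norm asymptotics and the local $\partial\bar\partial$–type vanishing (killing the higher local $L^2$–cohomology off the kernels of the $N_i$) available — and, if one also wants the two sides to agree as Hodge structures rather than merely as vector spaces, it supplies the Kähler identities and the $L^2$ Hodge decomposition on $U$.

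I expect the main obstacle to be exactly the local $L^2$–computation on the punctured polydisk: turning the several–variable Hodge norm estimates into a clean identification of the local $L^2$ de Rham cohomology with the weight–filtration data defining the stalk of $IC_X^\bullet\mathcal{V}$. In one variable this is Zucker's key lemma; in several variables it needs the full force of the $SL_2$–orbit theorem of Cattani–Kaplan–Schmid together with a careful induction on the number of boundary components, and essentially all of the work sits there. Once the stalks are matched, the globalization (fineness of $\mathcal{A}_{(2)}^\bullet$, a Mayer–Vietoris/hypercohomology argument, and compactness of $X$) is routine.
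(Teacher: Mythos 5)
The paper itself offers no proof of this statement: it is quoted as a known result (attributed to the reference [AR], and ultimately this is the theorem of Cattani--Kaplan--Schmid and Kashiwara--Kawai, generalizing Zucker's one-variable case), and the text only records its consequences (finite-dimensionality of $L^2$ cohomology, purity of intersection cohomology, the decomposition theorem for $f_*IC_X^\bullet L$). So there is nothing in the paper to compare your argument against line by line; what you have written is a faithful outline of the standard proof, and it correctly locates all of the genuine difficulty in the local $L^2$ computation on $(\Delta^*)^k\times\Delta^{n-k}$ with the Poincar\'e-type metric, which requires the Hodge norm estimates coming from the several-variable $SL_2$-orbit theorem. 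Two small points deserve attention. First, your comparison morphism is written in the wrong direction: there is no natural map $j_*\mathcal{V}[n]\to\mathcal{A}^\bullet_{(2)}$, since an arbitrary form on $\Omega\cap U$ need not be $L^2$; the natural map is the inclusion $\mathcal{A}^\bullet_{(2)}\hookrightarrow j_*\mathcal{A}^\bullet_U(\mathcal{V})\simeq Rj_*\mathcal{V}[n]$ obtained by forgetting the growth condition, and the content of the local computation is that this inclusion factors through, and is a quasi-isomorphism onto, the truncations defining $j_{!*}\mathcal{V}$. Second, you should record that the local monodromies are quasi-unipotent (automatic here by Borel's theorem for a polarized VHS on the complement of a normal crossing divisor in a compact K\"ahler manifold), since both the norm estimates and the description of the stalks of $IC_X^\bullet\mathcal{V}$ in terms of the weight filtrations of the $N_i$ presuppose it. With those repairs the sketch is the correct and, as far as I know, the only known route to the theorem; the analytic core you are deferring to the $SL_2$-orbit theorem is exactly where the published proofs spend their effort.
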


It follows that $L^2$ cohomology is finite dimensional, and also intersection cohomology carries a pure Hodge structure. The above theorem also gives a decomposition theorem for the direct image $f_*IC_X^{\bullet}L$, with $L$ a local system on $U$ and $f$ a proper or projective morphism.

\section{V-filtration}

The Kashiwara-Malgrange $V$-filtration of a regular holonomic $D_X$-module associated to a subvariety $Y \hookrightarrow X$ is an increasing filtration generally indexed by $\mathbb{Q}$ satisfying simple axiomatic conditions which characterize it. We explain this by an example. Let $X=\mathbb{C}$ with coordinate $t$ and $Y=0$. Fix a rational number $r \in (-1,0)$, and let $M=\mathcal{O}_{\mathbb{C}}[t^{-1}]t^r$, with $\partial_t$ acting on the left in the usual way. For each $\alpha \in \mathbb{Q}$ define $V_{\alpha}M \subset M$ to be the $\mathbb{C}$-span of $\{t^{n+r}|n \in \mathbb{Z}, n+r >-\alpha \}$. The following properties are easy to check 

\begin{itemize}
\item The filtration is exhaustive and left continuous: $\cup V_{\alpha}M=M$, and $V_{\alpha+\epsilon}=V_{\alpha}M$, for $0 <\epsilon <<1$
\item Each $V_{\alpha}M$ is stable under $t^i\partial_t^j$ if $i >j$.
\item $\partial_tV_{\alpha}M \subset V_{\alpha+1}M$, and $t.V_{\alpha}M \subset V_{\alpha-1}$.
\item The associated graded 

\[ Gr_{\alpha}^VM=V_{\alpha}/V_{\alpha-\epsilon}= \begin{cases}
\mathbb{C}t^{-\alpha} \ \text{if} \alpha \in r + \mathbb{Z}\\
0 \qquad \text{otherwise} 
\end{cases}
\]
is an eigen-space of $t\partial_t$ with eigenvalue $-\alpha$.
\end{itemize}

The last item implies that the set of indices that $V_{\alpha}M$, jumps is discrete. The above construction may be generalized to define $V$-filtration for a regular holonomic $D$-module on $X$ that are quasi-unipotent along a closed sub-variety $Y$. If $Y$ is smooth, then for such a module, there always exists a unique filtration satisfying similar properties as listed above, called the $V$-filtration along $Y$. Then $t$ would be replaced by the ideal sheaf of $Y \hookrightarrow X$. In case $Y$ is not smooth this construction can be done using embedding by graph. For instance, if $f:X \to \mathbb{C}$ is a holomorphic function, and and let $\imath_f :X \to X \times \mathbb{C}=Y$ be the inclusion by graph. Let $t$ be the coordinate on $\mathbb{C}$, and let 

\[ V_{\alpha}\imath_*M=D_{x \times 0}\langle t^i \partial_t^j|i-j >-|\alpha| \rangle \]

for $\alpha \in \mathbb{Q}$, \cite{AR}. Let $X_0=f^{-1}(0)$ be possibly a singular fiber. A holonomic $D_X$-module $M$ has quasi-unipotent monodromy along $X_0$, if the monodromy action on $\psi_t (DR \ M)$ is quasi-unipotent. Any regular holonomic $D_X$-module with quasi-unipotent monodromy is specializable along $X_0$, i.e the module can be extended over $X_0$. This can be done using embedding by graph of $f$, namely
$i_f:X \to X \times \mathbb{C}$. In fact the module 

\[ \tilde{M}=i_+M =\displaystyle{\int_{i_f}}(M,F) = M[\partial_t], \qquad DR \ i_+M=i_*M \]

works out here,

\begin{equation}
DR_{X \times 0} Gr_V^{\alpha}\tilde{M} \cong \begin{cases}
\psi_{t,\lambda} DR_X M[-1] \qquad -1 \leq \alpha <0,\\
\phi_{t,\lambda} DR_X M[-1] \qquad -1 < \alpha \leq 0.
\end{cases}
\end{equation}

The $V$-filtration is indexed by $\mathbb{Q}$ such that $t\partial_t-\alpha$ is nilpotent on $Gr_V^{\alpha}$, and

\begin{center}
$t:F_pV^{\alpha} \tilde{M} \to F_pV^{\alpha +1} \tilde{M}, \qquad \alpha >1 $\\[0.2cm]
$\partial_t:F_p Gr_V^{\alpha} \tilde{M} \to F_p Gr_V^{\alpha -1} \tilde{M} , \qquad \alpha>0$
\end{center}

are isomorphisms. By definition,

\begin{equation}
\psi_f(M)=\displaystyle{\bigoplus_{-1<\alpha \leq 0}}Gr_V^{\alpha}(\tilde{M}), \qquad  
\phi_f(M)=\displaystyle{\bigoplus_{-1<\alpha < 0}}Gr_V^{\alpha}(\tilde{M}) \oplus Gr_V^{-1}(\tilde{M})
\end{equation}

\begin{equation}
DR \psi_fM=\psi_f DR\  M[-1], \qquad DR\phi_fM=\phi_f DR \ M[-1]
\end{equation}

Moreover;

\begin{equation}
F_p\tilde{M}=\displaystyle{\sum_{i}}\partial_t^i(V^{-1}\tilde{M} \cap j_*j^{-1}F_{p-i}\tilde{M})
\end{equation}

where $j:X \times \mathbb{C}^* \to X \times \mathbb{C}$. This means that the $V$-filtration together with the Hodge filtration on the complement of $f^{-1}(0)$ determine the total Hodge filtration $F$, \cite{SAI2}. 

\section{Mixed Hodge Modules}

Roughly speaking, a mixed Hodge module is obtained by extension of polarized pure Hodge modules. A mixed Hodge module on complex algebraic manifold $X$ is given by an open cover $\{X_i\}$ of $X$, $U_i=X_i-Y_i, Y_i=t_i^{-1}(0), t_i:X \to \mathbb{C}$ and gluing data $(M \vert U_i,M\vert Y_i, u_i, v_i)$, \cite{PS}. The phenomenon is a method of gluing vector bundles. The gluing data is $(M_U,M_D,u,v)$ satisfying $v \circ u=N \otimes -1/2 \pi i$. It may be presented in the diagram,

\[ (\psi M=M_U) \stackrel{u}{\rightarrow} (\phi M=M_D) \stackrel{v}{\rightarrow} (M_U=\psi M) \]

where $\psi$ is the Deligne extension (of course the sequence is not exact). 

\begin{theorem} \cite{LI} The category of regular holonomic $D_X$-modules is the same as the category of diagrams $\displaystyle{M \substack{v \\ {\ \rightleftarrows \ } \\ u} N } $ of vector spaces, where $1_M-uv$ and $1_N-vu$ are invertible.
\end{theorem}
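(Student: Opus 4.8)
The statement being essentially local, the plan is to first reduce to the model situation $X=\Delta$, a small disc with coordinate $t$ and marked point $0$, the relevant objects being the regular holonomic $D_\Delta$-modules which are $\mathcal{O}$-coherent (i.e.\ a local system) on $\Delta^*=\Delta\setminus\{0\}$; the case of a general base reduces to this one by applying the model around each point of the singular locus together with the gluing formalism of Section 3 (Beilinson's ``How to glue perverse sheaves''). On $\Delta$ the Riemann--Hilbert correspondence recalled above identifies these $D$-modules with perverse sheaves on $\Delta$ constructible for the stratification $\{0\}\sqcup\Delta^*$, so it suffices to construct an equivalence between that category and the category of diagrams $(M,N,u,v)$ with $1_M-uv$ and $1_N-vu$ invertible.

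In one direction I would send a perverse sheaf $K$ to the quadruple $(\psi_t K,\ \phi_t K,\ \mathrm{can},\ \mathrm{var})$, i.e.\ take $u,v$ to be the canonical and variation maps between nearby and vanishing cycles. In $D$-module language, by the identifications recalled in Section 2, this is $\psi_t M=\bigoplus_{-1<\alpha\le 0}Gr_V^{\alpha}\tilde{M}$ and $\phi_t M=\bigoplus_{-1<\alpha<0}Gr_V^{\alpha}\tilde{M}\oplus Gr_V^{-1}\tilde{M}$, with $u,v$ induced by $\partial_t$ and $t$. Since $v\circ u$ and $u\circ v$ compute, up to the normalization $-1/2\pi i$, the logarithm of the monodromy $T$ on $\psi_t$ and on $\phi_t$ respectively, and the monodromy of a local system on $\Delta^*$ is automatically invertible, the operators $1_M-uv$ and $1_N-vu$ are invertible; one also uses Sylvester's determinant identity $\det(1_M-uv)=\det(1_N-vu)$, so that this is really a single nondegeneracy condition. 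Functoriality is that of $\psi_t,\phi_t,\mathrm{can},\mathrm{var}$.

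For the inverse functor I would start from a diagram $(M,N,u,v)$, form on $\Delta^*$ the local system $\mathcal{L}$ with fibre $M$ and monodromy $T=1_M-uv$ (invertible by hypothesis), and then glue $\mathcal{L}$ across $0$ by Beilinson's construction, realizing $K$ as a modification of $\mathrm{image}(j_!\mathcal{L}\to j_*\mathcal{L})$ through the maximal extension functor $\Xi$, the datum $(u,v)$ prescribing exactly how the vanishing-cycle space $N$ is spliced in; equivalently, on the $D$-module side, one reconstructs $\tilde{M}$ together with its $V$-filtration from $(M,N,u,v)$ using the formula $F_p\tilde{M}=\sum_i\partial_t^i(V^{-1}\tilde{M}\cap j_*j^{-1}F_{p-i}\tilde{M})$ of Section 2. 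One then checks that applying $\psi_t,\phi_t,\mathrm{can},\mathrm{var}$ to the glued object returns $(M,N,u,v)$, and conversely that gluing the nearby/vanishing data of a perverse $K$ reproduces $K$.

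The main obstacle is exactly this last verification: that the gluing functor is well-defined and that the two functors are mutually quasi-inverse on morphisms as well as objects. One must show the glued complex is genuinely perverse (equivalently, the reconstructed $(\tilde{M},V,F)$ satisfies the axioms of a regular holonomic module), and that invertibility of $1_M-uv$, $1_N-vu$ is not only necessary but precisely sufficient for this --- it is what guarantees that no spurious skyscraper at $0$ is created or destroyed, i.e.\ that the pair of functors $(\psi_t,\phi_t)$ equipped with $\mathrm{can},\mathrm{var}$ is conservative and detects both objects and morphisms. Concretely I expect this to come down to a short computation with the $V$-filtration: the subquotients $Gr_V^{\alpha}\tilde{M}$ for $-1<\alpha\le 0$ must be recovered from $(M,N,u,v)$, and a morphism of diagrams must extend uniquely across $0$ --- and it is at the step where one inverts $T$ to descend a morphism from $\Delta^*$ to all of $\Delta$ that the hypothesis is used.
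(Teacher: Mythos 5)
Your outline is correct and is the standard geometric route to this statement, but it is genuinely different from what the paper does, so a comparison is in order. The paper's proof (lifted from Beilinson) uses no nearby/vanishing cycle functors and no maximal extension: it takes for granted that a regular holonomic module on $(\Delta^*,0)$ amounts to a vector space $V_1'$ with an automorphism $\phi$ (the generic fibre with its monodromy), a vector space $V_0'$ over the origin, and maps $u,v$ between $V_0'$ and the nilpotent part $(V_1',\mathrm{id}-\phi)^0$ with $v\circ u=\mathrm{id}-\phi$; the entire content of the proof is then the displayed linear-algebra assignment trading this quintuple for a diagram $(V_0,V_1,u,v)$ on the \emph{full} spaces subject to the invertibility of $1-uv$ and $1-vu$. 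You instead construct the equivalence directly by $K\mapsto(\psi_tK,\phi_tK,\mathrm{can},\mathrm{var})$ and invert it by Beilinson gluing through the maximal extension. Your route makes the geometric meaning of the hypothesis visible (invertibility of the monodromy, no spurious skyscraper at $0$) and ties in the $V$-filtration description; the paper's route isolates the single nontrivial combinatorial step --- exchanging the automorphism $\phi$ for the pair $(u,v)$ on the full spaces --- once the description of modules over $(\Delta^*,0)$ is granted, so that nothing about perversity needs to be re-verified. Two caveats on your version. First, there is an internal inconsistency of normalization: you cannot both take $v\circ u=N=\log T_u\otimes(-1/2\pi i)$ (which is nilpotent, so $1-vu$ would be invertible for trivial reasons and the hypothesis would be empty) and set $T=1_M-uv$ as you do in the inverse construction; in the convention of the statement it is the latter, $v\circ u=1-T$, and the invertibility hypothesis is literally invertibility of the monodromy, while the logarithmic normalization belongs to the unipotent gluing description used elsewhere in the paper. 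Second, the ``main obstacle'' you flag --- that the glued object is perverse and that the two functors are mutually quasi-inverse --- is exactly the content of \cite{BEI}, and neither you nor the paper carries it out; that is acceptable here, but it should be presented as an appeal to that reference rather than as a verification you expect to complete.
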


\begin{proof}\cite{BEI}
For a vector space $V$ and $\phi \in End(V)$, let $(V,\phi)^0$ be the maximal subspace on which $\phi$ acts in a nilpotent way. Consider the category $C$ of diagrams $(V_0^{\prime},V_1^{\prime},\phi,u,v)$, where $V_0^{\prime},V_1^{\prime}$ are vector spaces, $\phi \in Aut V_1^{\prime}$, and $(V_1^{\prime}, id_{V_1^{\prime}}-\phi)^0 \displaystyle{ \substack{v \\ {\ \rightleftarrows \ } \\ u} V_0^{\prime} } $ are such that $v \circ u=id-\phi$. Then we have the following equivalence 

\begin{equation}
(\displaystyle{V_0 \substack{v \\ {\ \rightleftarrows \ } \\ u} V_1 }) \mapsto ((V_0^{\prime},u \circ v)^0,V_1^{\prime},id_{V_1}-(v \circ u),u,v)
\end{equation}

\end{proof}

The category regular holonomic $D$-modules on $(\Delta^*,0)$ is isomorphic to the above category, since 
modules over $0$ are only vector spaces, and over $\Delta^*$ are vector spaces with monodromy. Under this identification $\Psi_f(V,T)=(V,id_V-T)^0$ where $\Psi_f$ is the unipotent extention functor. One should encode the aforementioned gluing method as a standard way for two vector spaces where one is equipped with an isomorphism namely monodromy. Although one can do this task in many other ways, but the one explained will preserve perversity and also other expected properties via pull back and pushforward in this category.  

The category of perverse sheaves on the disk $D$ which are locally constant on $D^*$ is equivalent to the category of quivers of the form $ \psi \ \substack{c\\ \leftrightarrows \\v} \ \phi $ i.e. finite dimensional vector spaces $\psi, \phi$ with maps as indicated. 
A quiver $ \psi \ \substack{c \\ \rightleftarrows \\v} \ \phi $ corresponds to $j_*L[1]$ where $L=\psi$ and $T=I+v \circ c$. Then 

\[ \phi=\text{image}(c) \oplus \ker(v) \]

The extension of a Hodge module over a normal crossing compactification may be explained as follows. Assume $i:U \hookrightarrow X$ is the open inclusion and $X-U=D$, a normal crossing divisor. A MHM on $X$ determines (in a unique way) two MHM's , $M$ on $U$ and $M'$ on $D$ with gluing morphisms $u:\psi^{un}M \to M'$ and $v:M' \to \psi^{un}M(-1)$ such that $vu=N$, where $\psi^{un}$ is the uni-potent ($\lambda = 1$) part of $\psi$, \cite{SAI2}, \cite{SAI5}. Then, it is easily verified that 

\[ M^{\prime}=\text{Im} (u) \oplus \ker(v) \]

and $u $ and $v$ induce morphisms

\[ u:(M,W) \to (M^{\prime},W[1]), \qquad v:(M^{\prime},W) \to (M,W[1]) \]

The converse is also true. Given the above filtered maps then $N=uv=vu$ is nilpotent, and $W$ is the monodromy filtration for $M^{\prime}$. One can show that $u,v$ will preserve the weight and relative monodromy filtrations. 

A mixed Hodge modules corresponds to a variations of mixed Hodge structure. This means that $M$ is endowed with an increasing filtration $W$, called weight filtration such that $Gr_i^WM$'s are polarized Hodge modules of weight $i$. Here the extension can not be arbitrary. The imposed conditions are, 

\begin{itemize}
\item The original mixed Hodge module is polarized.
\item The relative weight filtration that is the weight filtration on $Gr_k^W M$, associated to the induced nilpotent operator $Gr_k N$ exists. 
\item The Hodge filtration extends over Deligne extension.
\item The nearby and vanishing cycle functors are well defined for $M$.
\item The filtrations $F,W, V^{(i)}, \ (0 \leq i \leq n)$ are compatible, where $V^{(i)}$ are the are the Kashiwara-Malgrange filtrations along the coordinate hyperplanes.
\end{itemize}

\noindent
These conditions together are called \textit{admissibility conditions}, \cite{SAI5}. It sould be understood as a criteria in order to make the (minimal) extension possible. Then the underlying perverse sheaves or local system will satisfy similar conditions via the functor $\text{rat}$. Through all the text we have assumed this condition, although not stated specifically. 

\begin{theorem} \cite{SAI2}, \cite{SAI5}
Admissible variation of mixed Hodge structures are mixed Hodge modules.
\end{theorem}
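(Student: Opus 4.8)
The plan is to reduce the statement to a local assertion near a normal crossing divisor and then to argue by induction on $\dim X$, the base case being curves. Since being a mixed Hodge module is a condition that is local on $X$, and since every admissible variation on a Zariski open $U\subset X$ may, after a resolution, be assumed to live on the complement of a normal crossing divisor $D=X\setminus U$ with quasi-unipotent local monodromies, it suffices to treat that case. A further d\'evissage reduces one to the situation where $D$ is a single smooth divisor, in fact to the one-variable model $(\Delta^{*},0)$: this is exactly what the admissibility requirement --- that $F$, $W$ and the iterated Malgrange--Kashiwara filtrations $V^{(i)}$ along the coordinate hyperplanes be mutually compatible --- is designed to permit. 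In the curve case the underlying regular holonomic $D_{\Delta}$-module and its perverse sheaf are produced from the quiver picture $\psi\rightleftarrows\phi$ recalled above, with $T=\mathrm{id}+v\circ u$.

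Next I would build the candidate extension. Starting from the flat bundle with connection on $U$ underlying the admissible variation, I form the Deligne canonical extension and, via the graph embedding $i_{f}\colon X\to X\times\mathbb{C}$, the module $\widetilde{M}=i_{f+}M$ together with its $V$-filtration along $t=0$. The Hodge filtration on $\widetilde{M}$ is then \emph{forced}: one must have $F_{p}\widetilde{M}=\sum_{i}\partial_{t}^{\,i}\bigl(V^{-1}\widetilde{M}\cap j_{*}j^{-1}F_{p-i}\widetilde{M}\bigr)$ where $j\colon X\times\mathbb{C}^{*}\hookrightarrow X\times\mathbb{C}$, so that $F$ on the complement determines $F$ on all of $\widetilde{M}$; and the weight filtration on the nearby cycles is taken to be the relative monodromy filtration of $N=\log T_{u}$ relative to the weight filtration carried over from $U$. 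With $\psi_{f}M=\bigoplus_{-1<\alpha\le 0}\operatorname{Gr}_{V}^{\alpha}\widetilde{M}$ and $\phi_{f}M=\bigoplus_{-1<\alpha<0}\operatorname{Gr}_{V}^{\alpha}\widetilde{M}\oplus\operatorname{Gr}_{V}^{-1}\widetilde{M}$, and $u,v$ induced by $\operatorname{can}$ and $\operatorname{var}$, one gets the gluing datum $(M_{U},M_{D},u,v)$ with $v\circ u=N\otimes(-1/2\pi i)$, and, as in the normal crossing discussion above, $M_{D}=\operatorname{Im}(u)\oplus\ker(v)$ with $u,v$ filtered maps $(M,W)\to(M_{D},W[1])$ and $(M_{D},W)\to(M,W[1])$.

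The substance of the proof is to verify that this $4$-tuple $(M,F,K,W)$ really obeys the axioms of $MHM(X)$ and, crucially, that these axioms are preserved at each step of the induction. One checks that $\psi_{f}$ and $\phi_{f}$ carry the object to objects of $MHM(X_{0})$; that each $\operatorname{Gr}_{k}^{W}$ is a polarizable Hodge module of weight $k$, using the hypothesis that the original variation is polarized together with the structure of the limit mixed Hodge structure and its polarization; and that the package is stable under the standard operations. The induction then closes because a mixed Hodge module is by definition an iterated extension of pure ones, a pure Hodge module with strict support $Z$ is determined by a polarizable variation of Hodge structure on a dense open subset of $Z$ (the pure case of the theory, which may be assumed), so every $\operatorname{Gr}_{k}^{W}$ of the variation on $U$ extends across $D$, and one reassembles the extensions from the $(u,v)$ data.

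The main obstacle I expect is the weight-filtration bookkeeping: proving that the relative monodromy filtration actually exists --- along each component of $D$ and on every $\operatorname{Gr}_{k}^{W}$ --- which is where the admissibility conditions are genuinely consumed rather than merely invoked, and then proving that the resulting $W$ is simultaneously compatible with $F$ and with all the $V^{(i)}$, so that the extended object is again admissible and the induction can proceed one divisor component at a time. A secondary difficulty is showing that $\psi_{f}$ and $\phi_{f}$ preserve polarizability, i.e.\ constructing the polarization on the nearby and vanishing cycles out of the one on $U$ by means of $N$ and the nilpotent orbit; this is what ultimately makes $\operatorname{Gr}_{k}^{W}\psi_{f}M$ a \emph{polarized} Hodge module and so keeps the whole construction inside $MHM$ throughout the induction.
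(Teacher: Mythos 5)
The paper itself offers no proof of this theorem --- it is imported verbatim from M.~Saito's work (\cite{SAI2}, \cite{SAI5}) --- and your outline reconstructs exactly the architecture that the paper's Sections 2--3 assemble around the statement: reduction to a normal crossing divisor, the graph embedding $i_f$ and its $V$-filtration, the forced formula $F_p\tilde M=\sum_i\partial_t^i(V^{-1}\tilde M\cap j_*j^{-1}F_{p-i}\tilde M)$, the gluing quadruple $(M_U,M_D,u,v)$ with $vu=N$, and the relative monodromy filtration as the point where admissibility is consumed. So this is essentially the same approach as the paper's (i.e.\ Saito's), correctly identified and with the genuinely hard verifications --- existence of the relative weight filtration on each $\operatorname{Gr}_k^W$ and polarizability of $\operatorname{Gr}^W\psi_f M$ --- honestly flagged rather than elided.
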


The extension of mixed Hodge modules should be understood as gluing two vector bundles via an isomorphism. The key point is that one of the vector bundles or its fiber is equipped with a monodromy. Another condition is to  distinguish the pure and the mixed case. In the mixed case one needs to add the admissibility criteria to be able to extend the structure. To extend the underlying  variation of mixed Hodge structure, it suffices to extend the corresponding $D$-module via the Riemann-Hilbert correspondence. Then Theorem 3.2 guarantees that the construction extends also the Hodge and the weight filtrations, and is unique. In some special cases the gluing could be described easier for the structure of the Gauss-Manin system. 

\begin{example}
The Gauss-Manin system $\mathcal{G}: =R^{n} f_{*} \mathbb{C}_{X'}$ of a polynomial or holomorphic map $f$ with isolated singularity, with the Milnor representative $f:X' \to T^{\prime}$ is a module over the ring $\mathbb{C}[\tau,\tau^{-1}]$, where $\tau$ is a variable, and comes equipped with a connection, that we view as a $\mathbb{C}$-linear morphism $\partial_{\tau}:\mathcal{G} \to \mathcal{G}$ satisfying Leibnitz rule. In order to extend it as a rank $\mu $-vector bundle on $\mathbb{P}^1$, one is led to study lattices i.e. $\mathbb{C}[\tau], $ and $\mathbb{C}[t]$-submodules which are free of rank $\mu$. In the chart $t$, the Brieskorn lattice 

\begin{center}
$\mathcal{G}_0=image (\Omega^{n+1}[\tau^{-1}] \to \mathcal{G})=\dfrac{\Omega^{n+1}[t]}{(td-df \wedge)\Omega^{n+1}[t]}$
\end{center}

is a free $\mathbb{C}[t]$ module of rank $\mu$. It is stable by the action of $\partial_{\tau}=-t^2\partial_t$. Therefore $\partial_t$ is a connection on $\mathcal{G}$ with a pole of order $2$. We consider the increasing exhaustive filtration $\mathcal{G}_p:=\tau^p\mathcal{G}_0$ of $\mathcal{G}$. In the chart $\tau$, there are various natural lattices indexed by $\mathbb{Q}$, we denote them by $V^{\alpha}$, with $V^{\alpha-1}=\tau V^{\alpha}$. On the quotient space $C_{\alpha}=V^{\alpha}/V^{>\alpha}$ there exists a nilpotent endomorphism $(\tau\partial_{\tau}-\alpha)$. The space $\oplus_{\alpha \in [0,1[}C_{\alpha}$ is isomorphic to $H^n(X_{\infty},{\mathbb{C}})$, and $\oplus_{\alpha \in [0,1[}F^pC_{\alpha}$  is the limit MHS on $H^n(X_{\infty},{\mathbb{C}})$. A basic isomorphism can be constructed as 

\begin{center}
$\begin{CD}
\dfrac{\mathcal{G}_p\cap V^{\alpha}}{\mathcal{G}_{p-1}\cap V^{\alpha}+\mathcal{G}_p \cap V^{>\alpha}}=Gr_F^{n-p}(C_{\alpha}) \\   
@V\tau^pV{\cong}V \\
\dfrac{ V^{\alpha+p} \cap \mathcal{G}_0}{ V^{\alpha}\cap \mathcal{G}_{-1}+ V^{>\alpha} \cap \mathcal{G}_0}=Gr_{\alpha+p}^V(\mathcal{G}_0/\mathcal{G}_{-1})
\end{CD}$.
\end{center}

and the gluing is done via this isomorphisms. 
\end{example}

\section{Local Systems Over $\mathbb{C}^*$}

We give a simple explanation of unipotent local systems on $\mathbb{C}^*$. This provides a picture of general unipotent local systems of arbitrary $D$-modules. We study the local system of vector spaces over $\mathbb{C}$ of dimension $n$ with a unipotent monodromy given by 

\[
   M^{un}=
  \left[ {\begin{array}{cccc}
   1 & -1 &  &  \\
   0 & 1 & -1 &  \\
   \vdots  &   &  \ddots  & -1 \\
         &    &         & 1
  
  \end{array} } \right]
\]

which has a filtration of length $n$. Lets begin by putting
 
\[ \mathcal{J}^{(n)} :=\ \displaystyle{\sum_{k=0}^{n-1}} \mathcal{O}_U .\log^k \]

They satisfy a system of inclusions and projections in an obvious way as; 

\[ 0 \ \substack{{\hookrightarrow} \\ {\twoheadleftarrow}} \ \mathcal{J}^{(1)} \ \substack{{\hookrightarrow} \\ {\twoheadleftarrow}} \ ... \ \substack{{\hookrightarrow} \\ {\twoheadleftarrow}} \ \mathcal{J}^{(0)} \  ... \]

Set,

\begin{center}

$\mathcal{J}_f^{a,b}:= \mathcal{J}^a/\mathcal{J}^b $

\end{center} 
Then,

\[ \mathcal{J}^{0,1} \hookrightarrow \mathcal{J}^{0,2} \hookrightarrow  ...\hookrightarrow \mathcal{J}^{0,3} ... , \qquad \mathcal{J}_f^{0,\infty}=\lim_{\leftarrow}\mathcal{J}^{0,b} \]

\[ \mathcal{J}^{-1,0} \hookrightarrow \mathcal{J}^{-2,0} \hookrightarrow  ...\hookrightarrow \mathcal{J}^{-3,0} ..., \qquad \mathcal{J}_f^{0,\infty}=\lim_{\rightarrow}\mathcal{J}^{a,0} \]

\begin{definition} 

\[\mathcal{J}_f^{-\infty,\infty}= \lim_{\leftarrow}\lim_{\rightarrow}\mathcal{J}^{a,b}, \qquad M^{-\infty,\infty}=\lim_{\leftarrow}\lim_{\rightarrow}(M \otimes_{\mathcal{O}_{\mathbb{C}^*}} \mathcal{J}^{a,b}) \]

where $M$ is any $D$-module on $\mathbb{C}^*$.
\end{definition}

\noindent
We have $\mathbb{D}\mathcal{J}^{a,b}=Hom_{\mathcal{O}_{\mathbb{C}^*}}(\mathcal{J}^{a,b},\mathcal{O}_{\mathbb{C}^*}) \cong \mathcal{J}^{-b,-a}$, by

\[ \mathcal{J}^{a,b} \otimes \mathcal{J}^{-b,-a} \to \mathcal{J}^{0,1}=\mathcal{O}_{\mathbb{C}^*}, \qquad \langle f(s), g(s) \rangle=Res_{s=0}f(s)g(-s)ds \]

Therefore, 
 
\begin{center}
$\mathbb{D}J^{0,n}=J^{-n,0} \cong J^{0,n} , \qquad \mathbb{D}(M \otimes_{\mathcal{O}_U} \mathcal{J}_f^{a,b})=\mathbb{D}M \otimes_{\mathcal{O}_U} \mathcal{J}_f^{-b,-a} $
\end{center}

If we have a non-degenerate bilinear (or polarization) pairing 

\[ K:M \otimes M \to \mathcal{O}_{\mathbb{C}^*} \]

then the induced bilinear form, 

\begin{center}
$ \tilde{K}: \psi_{\lambda}M^{-\infty,\infty} \otimes \psi_{\lambda}M^{-\infty,\infty} \to \mathbb{C} $
\end{center} 
\begin{equation} \langle m \otimes f(s), n \otimes g(s) \rangle=Res_{s=\alpha}f(s)g(-s) K(m,n) ds 
\end{equation}

is non-degenerate. Later we use this as an strategy in order to extend the polarization in a unipotent extension. The combinatorial framework of the $D$-modules $\mathcal{J}^{a,b}$ allows to explain the duality on Deligne extensions in a simple way, via the trace map and residue. 

\begin{lemma} (Key lemma)

\[ j_*M^{-\infty,\infty}=j_!M^{-\infty,\infty} \]

and similarly for $M_k^{-\infty,\infty}$. 

\end{lemma}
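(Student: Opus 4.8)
The plan is to exploit the "pro-object / ind-object" structure of the $D$-modules $\mathcal{J}^{a,b}$ together with the residue pairing, and reduce the claimed identity $j_*M^{-\infty,\infty}=j_!M^{-\infty,\infty}$ to the fact that on the pro-ind completion the Deligne extension becomes "self-dual" in a way that forces the natural map $j_!\to j_*$ to be an isomorphism. First I would recall that for a fixed finite pair $(a,b)$ the module $M\otimes_{\mathcal{O}_{\mathbb{C}^*}}\mathcal{J}^{a,b}$ is a meromorphic connection on $\Delta$ with a well-defined Deligne lattice, so that $j_!\bigl(M\otimes\mathcal{J}^{a,b}\bigr)$ and $j_*\bigl(M\otimes\mathcal{J}^{a,b}\bigr)$ are the two extremal extensions; the cokernel of $j_!\to j_*$ is then supported at $0$ and is computed by the monodromy invariants/coinvariants of the local system, i.e. by $\psi^{un}$ applied to the quiver picture spelled out before Theorem~3.2. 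The point is that passing to $\varprojlim_b\varinjlim_a$ kills this finite-dimensional discrepancy: for each truncation level it lands in $\mathcal{J}^{a,b}/\mathcal{J}^{a',b}$-type subquotients which become negligible in the limit.

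The key steps, in order, would be: (1) identify $j_!M^{-\infty,\infty}$ and $j_*M^{-\infty,\infty}$ as $\varprojlim\varinjlim$ of the corresponding extensions of $M\otimes\mathcal{J}^{a,b}$, using that $j_!$ and $j_*$ commute with the relevant filtered limits on holonomic $D$-modules; (2) for fixed $(a,b)$, write down the exact sequence $0\to j_!(M\otimes\mathcal{J}^{a,b})\to j_*(M\otimes\mathcal{J}^{a,b})\to Q^{a,b}\to 0$ with $Q^{a,b}$ a skyscraper at $0$, and compute $Q^{a,b}$ via the nearby-cycle/monodromy data — here the combinatorics of $\mathcal{J}^{a,b}=\mathcal{J}^a/\mathcal{J}^b$ as $\sum\mathcal{O}_U\log^k$ makes the $\log$-shift explicit; (3) show $\varprojlim_b\varinjlim_a Q^{a,b}=0$, because the transition maps in $a$ (inclusions $\mathcal{J}^{a,0}\hookrightarrow\mathcal{J}^{a-1,0}$) shift the skyscraper out of range while the transition maps in $b$ are surjective with the right kernels, so the double limit of the discrepancy vanishes; (4) conclude that the induced map on $M^{-\infty,\infty}$ is an isomorphism, and observe the argument is natural so it applies verbatim to the pieces $M_k^{-\infty,\infty}$.

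An alternative, perhaps cleaner, route is to use the duality already set up in the excerpt: $\mathbb{D}(M\otimes_{\mathcal{O}_U}\mathcal{J}_f^{a,b})=\mathbb{D}M\otimes_{\mathcal{O}_U}\mathcal{J}_f^{-b,-a}$ together with $\mathbb{D}j_!=j_*\mathbb{D}$. Applying $\mathbb{D}$ to $j_!M^{-\infty,\infty}$ exchanges the roles of the two limits ($\varprojlim_b\leftrightarrow\varinjlim_{-b}$, $\varinjlim_a\leftrightarrow\varprojlim_{-a}$) and turns $j_!$ into $j_*$; since the defining double limit $\varprojlim\varinjlim\mathcal{J}^{a,b}$ is, up to the residue identification $\mathbb{D}\mathcal{J}^{a,b}\cong\mathcal{J}^{-b,-a}$, invariant under this swap, one gets $\mathbb{D}(j_!M^{-\infty,\infty})\cong j_*(\mathbb{D}M)^{-\infty,\infty}$ and then dualizes back. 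I would present this as the main line and use step (2)–(3) above only to justify the interchange of $\mathbb{D}$ with the pro-ind limit.

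The main obstacle I expect is precisely the compatibility of Verdier/holonomic duality (and of $j_!$, $j_*$) with the inverse-then-direct limit $\varprojlim_b\varinjlim_a$: neither $\mathbb{D}$ nor $j_*$ is continuous for arbitrary limits of $D$-modules, so one must check that the specific tower $\{M\otimes\mathcal{J}^{a,b}\}$ is "tame" — e.g. that the $\varinjlim_a$ is a filtered colimit of sub-holonomic-modules with injective transitions and the $\varprojlim_b$ is a Mittag-Leffler tower of surjections — so that the discrepancy skyscrapers $Q^{a,b}$ have vanishing double limit and no $\varprojlim^1$ obstruction appears. Once that finiteness/Mittag-Leffler bookkeeping is in place, the rest is the formal residue-duality manipulation sketched above.
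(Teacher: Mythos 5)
Your steps (1)--(4) are in substance the argument the paper gives (and attributes to Beilinson): the paper's entire proof is the remark that for the natural map $\alpha\colon j_!(M\otimes f^*\mathcal{J}^{0,n})\to j_*(M\otimes f^*\mathcal{J}^{0,n})$ one has $\ker\alpha\hookrightarrow\Psi_f^{un}$ with \emph{equality} for $n\gg 0$; that is, the finite-level kernels (and, dually, your discrepancy skyscrapers $Q^{a,b}$) do not merely form a Mittag--Leffler system but actually stabilize, which is exactly what dissolves the $\varprojlim^1$ worry you raise at the end. Where you genuinely depart from the paper is in promoting the duality swap $\mathbb{D}(M\otimes\mathcal{J}^{a,b})=\mathbb{D}M\otimes\mathcal{J}^{-b,-a}$ together with $\mathbb{D}j_!=j_*\mathbb{D}$ to the main line: this buys you the cokernel statement from the kernel statement (and $j_*$ from $j_!$) for free and makes the self-duality of the tower conceptually transparent, but it does not spare you the finite-level computation, since the interchange of $\mathbb{D}$ and $j_*$ with $\varprojlim_b\varinjlim_a$ is itself only licensed by the eventual constancy of the towers --- so both routes bottom out in the same stabilization lemma. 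One caution on your step (3): the transition maps on kernels and cokernels are the ones induced on the $\phi$-components of the quivers by $N_M\otimes 1+1\otimes N_{\mathcal{J}}$, not the naive inclusions $\mathcal{J}^{a,b}\hookrightarrow\mathcal{J}^{a-1,b}$ of local systems, and it is this shift of the Jordan block that pushes the kernel out of range; with that bookkeeping done as in Beilinson, your argument closes.
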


The point is, if 

\begin{center}
$\alpha: j_!(M \otimes f^*J^{0,n}) \to j_*(M \otimes f^*J^{0,n})$ 

\end{center}

be the natural map, then 
$\ker \alpha \hookrightarrow \Psi_f^{un} $ (the unipotent extension functor) and this injection is an equality for $ n>>0$.  This idea originally belongs to Beilinson, in the paper "How to glue Perverse sheaves". The above lemma plays the main task to explain the minimal extension in the unipotent case. For a filtered ring $A$ with $Gr A=\oplus A^i/A^{i+1}=\oplus \mathbb{Z}(i)$ and can define 

\[ \langle.,.\rangle:A \times A \to \mathbb{Z}(i) ,\qquad \langle f,g\rangle=Res_{\tilde{t}=1}(f.g^-d\log \tilde{t}) \]

where $g \to g^-$ is a natural involution on $A$. then he sets 

\[ A^1=(A^{-1})^{\perp}, \qquad A^{a,b}:=A^a/A^b \cong Hom(A^{-b}/A^{-a},\mathbb{Z}(-1)) \]

This ring in many ways is like the local system $\mathcal{J}$. The same holds if replace $\mathbb{Z}(i)$ with $(\mathbb{Z}/l^n)(i)$ for the $l$-adic local systems by repeating every thing word by word and, 

\[ A_{et}=\lim_{\rightarrow a} \lim_{\leftarrow b}A^{a,b}
\]

in this case. 

\section{Polarization}

The duality of $D$-modules is the duality of vector bundles with connections. In this way it would be a type of Serre duality of coherent sheaves. As a first step is better we stress that the vector bundle is filtered by a holomorphic filtration $F$. In order to reflect the Hodge structure and polarization one is led to consider the graded structure associated to this filtration. Let $(\mathcal{G}, \nabla, F, \mathcal{H}_{\mathbb{Q}},S)$ be a polarized variation of Hodge structure of weight $n$. The flat connection $\nabla $ makes the vector bundle $\mathcal{G}$ into a left $D$-module. Now consider a polarization

\begin{center}
$S:\mathcal{H}_{\mathbb{Q}} \times \mathcal{H}_{\mathbb{Q}} \to \mathbb{Q}(-n)$ 
\end{center}

of the variation. By definition we have $S(F^p,F^q)=0$ for $p+q >n$. Thus $S$ descends to a non-degenerate pairing between $Gr_F^{k}\mathcal{G}$ and $Gr_F^{-n+k}\mathcal{G}$, for all $k$. Thus, we get an isomorphism

\begin{equation}
\displaystyle{\bigoplus_{k \in \mathbb{Z}}Gr_F^k\mathcal{G} \to \bigoplus_{k \in \mathbb{Z}}Hom_{\mathcal{O}_X}(Gr_F^{n-k} \mathcal{G}, \mathcal{O}_X)}
\end{equation}

Moreover, we obtain that, $S$ is flat with respect to the Gauss-Manin connection, and 

\[ dS(\lambda_1,\lambda_2)=S(\nabla \lambda_1,\lambda_2)+S(\lambda_1,\nabla \lambda_2) \]

\begin{definition}
If $\mathcal{G}=\oplus \mathcal{G}_k$ is a graded module, then its graded dual is defined by

\[ \mathcal{G}^{\vee}=\oplus Hom_{\mathcal{O}_X}(\mathcal{G}_{-k},\mathcal{O}_X) \]

the $\ i$-th derived functor of $D$ is evidently

\[ \mathcal{G}^{\vee i}=\oplus Ext_{\mathcal{O}_X}^i(\mathcal{G}_{-k},\mathcal{O}_X) \]

\end{definition}

\begin{definition}

The (Verdier) dual of a $D$-module is defined by

\[ \mathbb{D}M=\text{Ext}_{D}^d(M,D \otimes_{\mathcal{O}} \omega_X^{-1}) \]

A polarization of $M$ is an isomorphism 

\[ M \cong \mathbb{D}(M)(-w) \]
\end{definition}

\noindent
A polarization of a Hodge module is a duality $\mathbb{D}M=M(n)$ where $(n)$ is the Tate twist, which is essentially defined by the shift of the complex by $n$.

\begin{theorem}\cite{SCH} Let $\mathcal{G}$ be the Hodge module associated to a polarized variation of Hodge structure $(\mathcal{H}_{\mathbb{Q}}, \nabla, F, S)$ of weight $n$, with $S:\mathcal{H}_{\mathbb{Q}} \otimes \mathcal{H}_{\mathbb{Q}} \to \mathbb{Q}(-n)$ the polarization. Then, we have the isomorphism 

\begin{equation}
\displaystyle{\bigoplus_{k \in \mathbb{Z}}Gr_F^k\mathcal{G} \to \bigoplus_{k \in \mathbb{Z}}Hom_{\mathcal{O}_X}(Gr_F^{n-k} \mathcal{G}, \mathcal{O}_X)}
\end{equation}

given by (up to a sign factor) $\lambda \to S(\lambda,-)$, for $\lambda \in Gr_F^k\mathcal{G}$. 
\end{theorem}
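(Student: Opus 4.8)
The plan is to unwind the statement so that it becomes a purely linear-algebraic assertion at each point of $X$, fibrewise, and then observe that everything is compatible with the $\mathcal{O}_X$-module structure so that it globalizes. First I would recall the two defining properties of a polarization $S$ of a PVHS of weight $n$: it is flat with respect to $\nabla$ (the displayed Leibniz identity $dS(\lambda_1,\lambda_2)=S(\nabla\lambda_1,\lambda_2)+S(\lambda_1,\nabla\lambda_2)$), and it satisfies the first Hodge--Riemann bilinear relation $S(F^p,F^q)=0$ whenever $p+q>n$. The second relation is exactly what is needed to make the map $\lambda\mapsto S(\lambda,-)$ descend from $F^k\mathcal{G}$ to $Gr_F^k\mathcal{G}$ with values in $\mathrm{Hom}_{\mathcal{O}_X}(Gr_F^{n-k}\mathcal{G},\mathcal{O}_X)$: if $\mu\in F^{n-k+1}\mathcal{G}$ then $S(\lambda,\mu)=0$ for $\lambda\in F^k\mathcal{G}$ because $k+(n-k+1)=n+1>n$, so the pairing kills $F^{n-k+1}$ in the second variable and $F^{k+1}$ in the first.

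Next I would assemble these fibrewise maps into the asserted graded morphism. Taking the direct sum over $k\in\mathbb{Z}$ gives an $\mathcal{O}_X$-linear map $\bigoplus_k Gr_F^k\mathcal{G}\to\bigoplus_k\mathrm{Hom}_{\mathcal{O}_X}(Gr_F^{n-k}\mathcal{G},\mathcal{O}_X)$; $\mathcal{O}_X$-linearity is immediate since $S$ is $\mathcal{O}_X$-bilinear (it is a pairing of the underlying bundles). The content is that this map is an \emph{isomorphism}. Here I would invoke the second Hodge--Riemann relation: the Hermitian form $h(u,v)=S(Cu,\bar v)$ (with $C$ the Weil operator) is positive definite on each primitive piece, and in particular the bilinear form $S$ induces a \emph{non-degenerate} pairing between $Gr_F^k\mathcal{G}$ and $Gr_F^{n-k}\mathcal{G}$ at every point --- this is precisely the non-degeneracy that Theorem~\ref{...} of Schmid (cited as \cite{SCH}) records, and which is already stated in the excerpt in the sentence ``$S$ descends to a non-degenerate pairing between $Gr_F^k\mathcal{G}$ and $Gr_F^{-n+k}\mathcal{G}$''. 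Non-degeneracy of a bilinear pairing between two finite-rank locally free sheaves of equal rank is equivalent to the induced map to the $\mathcal{O}_X$-dual being an isomorphism, so the graded map above is an isomorphism.

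Finally I would address the remaining half-sentence, that $S$ is flat for the Gauss--Manin connection and satisfies the Leibniz identity; this is part of the very definition of a polarization of a VHS (the pairing $S$ is required to be a morphism of variations $\mathcal{H}_{\mathbb{Q}}\otimes\mathcal{H}_{\mathbb{Q}}\to\mathbb{Q}(-n)$, hence flat), so I would simply record it and note that differentiating the constancy of $S$ on flat sections yields the displayed identity. The ``up to a sign factor'' in the statement is the standard normalization $(-1)^{k}$ or $i^{p-q}$ coming from comparing $S$ with the Hermitian form; I would fix one convention and note that the sign does not affect the isomorphism claim. The one genuine point of substance --- and the step I expect to be the main obstacle to write cleanly --- is the non-degeneracy on the graded pieces, i.e.\ deducing it from the second Hodge--Riemann relation rather than merely asserting it; everything else is bookkeeping with the first Hodge--Riemann relation and the definition of a morphism of variations. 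Since the excerpt already grants the non-degeneracy statement as part of \cite{SCH} and of the discussion preceding Definition~\ref{...}, the proof reduces to: (i) descend via $p+q>n\Rightarrow S(F^p,F^q)=0$; (ii) sum over $k$; (iii) invoke non-degeneracy to get an isomorphism; (iv) record flatness from the definition.
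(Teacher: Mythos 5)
Your proposal is correct and takes essentially the same route as the paper: the paper's own justification (given in the paragraph immediately preceding the theorem, the result itself being cited to \cite{SCH}) is exactly your steps (i)--(iii) --- the first bilinear relation $S(F^p,F^q)=0$ for $p+q>n$ yields the descent to the graded pieces, non-degeneracy of the descended pairing upgrades the map to an isomorphism, and flatness of $S$ is recorded from the definition of a morphism of variations. The only substantive addition on your side is that you supply an explicit source for the non-degeneracy on $Gr_F^k\mathcal{G}\otimes Gr_F^{n-k}\mathcal{G}$ (the second Hodge--Riemann relation via the Weil operator), a point the paper merely asserts.
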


If $f:X \to Y$ is a projective morphism between smooth complex varieties, and $M$ a (pure) Hodge module on $X$ with strict support and of weight $n$, then $R^kf_*M$ is a Hodge module on $Y$ of weight $n+k$. If $M \in MH(X)^p$, then its cohomology carries a Hodge structure. The Lefschetz decomposition theorem may be stated as;

\[ PGr_i^WM=\ker(GrN^{i+1} :Gr_i^WM \to Gr_{-i-2}^WM) \]
\[ \sum Gr^W N^m=\oplus PGr_{i+2m}^WM \cong Gr_{i}^WM \]

The duality functor is stable under $PGr^W\psi_f$ in case the $D$-module is defined via the fibration by $f$. If two graded module or vector-space having a Lefschetz decomposition relative to a specific nilpotent operators of degree 1. Then, a bilinear or hermitian form will polarize them if and only if the level graded polarizations dualize the corresponding primitive sub-spaces. Moreover, the two corresponding bilinear forms would be isomorphic if and only if the set of graded polarizations are isomorphic.

\begin{theorem} \cite{SAI3} Assume $f:X \to Y$ is a morphism of smooth analytic manifolds, and $(M,F,K;W)$ is a mixed Hodge module polarized by, namely $S$. Then

\[ (-1)^{j(j-1)/2}R^jf_*S \circ(id \otimes l^j):P_lR^{j}f_*K \otimes P_lR^{j}f_*K \to \mathbb{Q} \]

is a polarization on the primitive components, for $j \geq 0$.
\end{theorem}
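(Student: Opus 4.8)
The plan is to reduce the assertion to the classical Hodge--Riemann bilinear relations on a generic fibre and then transport them back using the rigidity of polarizations on pure Hodge modules. Throughout one works, as the context requires, with $f$ projective. The first reduction is to the pure case: a polarization of the mixed Hodge module $(M,F,K;W)$ is by definition a system of polarizations on the weight-graded pieces $\mathrm{Gr}^W_k M$, compatible with the relative monodromy; since $R^jf_*$ behaves well with respect to $W$ (weights are shifted by $j$, and $W$ is strictly preserved), and since the $l$-primitive pieces of $R^jf_*$ only involve the pure subquotients after passing to $\mathrm{Gr}^W$, it suffices to treat a polarized pure Hodge module, say of weight $n$. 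Next one factors $f$ through the graph embedding $X\hookrightarrow X\times Y$ followed by the projection $X\times Y\to Y$; a closed immersion $i$ sends a polarized Hodge module to a polarized Hodge module (this is contained in the definition of $MHM$ together with $\mathbb{D}\,i_*\cong i_*\mathbb{D}$), so one is reduced to $f$ a projective-space bundle projection, and then, iterating over the factors, to $f$ the projection $Y\times\mathbb{P}^1\to Y$, or, taking $Y$ to be a point, to $X$ projective over a point.

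The analytic input is the relative Hard Lefschetz theorem for polarizable Hodge modules. Write $l$ for cup product with the Chern class of a relatively ample line bundle; then $l$ is a morphism of pure Hodge modules raising weight by $2$, the iterate $l^{j}$ induces an isomorphism between the complementary cohomology modules $R^{-j}f_*M$ and $R^jf_*M$, and there is a Lefschetz decomposition $R^jf_*M=\bigoplus_{i\ge 0} l^{i}\bigl(P_lR^{j-2i}f_*M\bigr)$ whose primitive summands are kernels of appropriate powers of $l$. Granting this, the pairing is assembled as follows: the polarization of $M$ gives an isomorphism $\mathbb{D}M\cong M(n)$; Verdier duality $\mathbb{D}\,R^jf_*M\cong R^{-j}f_*\mathbb{D}M$ combined with this isomorphism yields a perfect pairing between $R^jf_*M$ and $R^{-j}f_*M$ with values in a Tate twist of $\mathbb{Q}_Y$; and the relative Hard Lefschetz isomorphism $l^j$ identifies $R^{-j}f_*M$ with $R^jf_*M$, turning the previous pairing into the self-pairing $R^jf_*S\circ(\mathrm{id}\otimes l^j)$. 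Restricting to the primitive part $P_lR^jf_*K$ and inserting the universal sign $(-1)^{j(j-1)/2}$ is then pure sign bookkeeping, carried out exactly as in the compact Kähler case, where this is the sign occurring in the Hodge--Riemann relations; it is forced by commuting $l$ across the duality pairing $j$ times.

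What remains is the positivity of the Hermitian form attached to $(-1)^{j(j-1)/2}R^jf_*S\circ(\mathrm{id}\otimes l^j)$ on each primitive piece $P_lR^jf_*K$. Here I would invoke the decomposition by strict support: the pure Hodge module $R^jf_*M$ splits into a direct sum of pure Hodge modules with irreducible strict supports $Z_\alpha\subset Y$, and by \cite{SAI2} each summand corresponds to a polarizable variation of Hodge structure on a Zariski-dense open subset $Z_\alpha^{\circ}\subset Z_\alpha$; crucially, a polarization of a pure Hodge module with strict support is already determined by its restriction to such an open subset, so positivity need only be checked there. Over $Z_\alpha^{\circ}$ the sheaf $R^jf_*M$ is the degree-$j$ cohomology of the fibres of $f$ with coefficients in the polarized VHS underlying $M$, and the desired positivity is precisely the Hodge--Riemann bilinear relations for that fibre cohomology, valid by the polarized Hodge theory of (possibly singular or non-compact) projective fibres --- in the non-compact case this is exactly the content of the identification of intersection cohomology with $L^2$-cohomology recalled earlier (\cite{AR}). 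A final comparison, again by the uniqueness of the extension of a polarization once it is known on a dense open subset, shows that the globally constructed form agrees over $Z_\alpha^{\circ}$ with this fibrewise Hodge--Riemann form up to the sign and Lefschetz twist already accounted for; hence it polarizes the primitive components, as claimed.

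The main obstacle is the relative Hard Lefschetz theorem together with the positivity it encodes: these are the technical heart of the theory, proved by a simultaneous induction on $\dim\mathrm{Supp}\,M$ intertwined with the Decomposition Theorem, the stability of polarized Hodge modules under $\psi_f$ and $\phi_f$, and ultimately the $SL(2)$-orbit theorem controlling the asymptotics of the period map near the boundary divisor. A secondary difficulty, genuinely present only in the mixed setting, is to verify that the admissibility package --- compatibility of $F$, $W$ and the $V$-filtrations, and existence of the relative monodromy weight filtrations --- is inherited by $R^jf_*$, so that one is entitled to speak of a polarization of the output at all.
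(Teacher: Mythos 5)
The paper does not actually prove this statement: it is quoted verbatim from Saito (the citation \cite{SAI3}), and the only surrounding material is the informal paragraph on Lefschetz decompositions and the remark that a form polarizes a graded object iff it dualizes the primitive pieces. So there is no in-paper argument to compare yours against; what you have written is a reconstruction of Saito's own proof strategy, and as such it is essentially the standard one: reduction to the pure case via $\mathrm{Gr}^W$, factorization through the graph embedding and projective-bundle projections, relative Hard Lefschetz plus Verdier duality to assemble the pairing $R^jf_*S\circ(\mathrm{id}\otimes l^j)$, and positivity checked over a dense open subset of each strict support where the statement becomes the classical Hodge--Riemann bilinear relations. The sign $(-1)^{j(j-1)/2}$ and the restriction to $P_l$ are handled correctly.

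Two caveats you should make explicit. First, your plan has a latent circularity: you invoke the decomposition of $R^jf_*M$ by strict support and the relative Hard Lefschetz theorem as inputs, but in Saito's development these are \emph{consequences} of the same inductive package that establishes the polarizability of the direct image; the induction on $\dim\mathrm{Supp}\,M$ must carry the decomposition theorem, Hard Lefschetz, and the positivity statement simultaneously, with the stability of polarizations under $\psi_f$, $\phi_{f,1}$ and the $SL(2)$-orbit theorem supplying the inductive step. You name this at the end as ``the main obstacle,'' which is honest, but it means the earlier reductions cannot be taken as free. Second, the positivity over the open stratum is not quite ``the Hodge--Riemann relations for fibre cohomology'' in the naive sense unless $f$ is smooth there; for a general projective $f$ one needs the polarization on intersection cohomology of the fibres (the $L^2$/Cattani--Kaplan--Schmid--Kashiwara--Kawai input you cite from \cite{AR}), and the comparison of that polarization with the globally constructed pairing is itself part of what has to be proved, not a formality. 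With those two points flagged, the plan is a faithful and correct outline of the argument the paper is implicitly relying on.
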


Polarization plays an important role in the extension process we are concerned and actually is equivalent to that. Consider the the nondegenerate form

\[ K:\mathcal{H}^{\prime} \otimes_{\mathcal{O}} \overline{\mathcal{H}} \to \mathbb{C}[t,t^{-1}] \] 

It induces an isomorphism 

\[ \mathcal{H}^{\prime \vee} \cong_{\mathcal{O}} \overline{\mathcal{H}} \]

We can glue the above bundles by this isomorphism obtained from the polarization. Therefore, in situation of (1) we have

\[ \mathcal{H}^{(0) \vee} \cong \overline{\mathcal{G}_{\infty}} ,\qquad  \Rightarrow \qquad \Omega_f^{\vee} \cong \overline{H^n(X_{\infty},\mathbb{C})} \]

as PVMHS, and PMHS respectively. The corresponding connections are given by

\[ \nabla': \mathcal{H}^{\prime} \to \dfrac{1}{z}\Omega^1 \otimes \mathcal{H}^{\prime} , \qquad \nabla: \overline{\mathcal{H}} \to {z}\Omega^1 \otimes \overline{\mathcal{H}} \]

respectively, \cite{DW} exp. 1, pages 12, 13.

\section{Isolated hypersurface singularities}

Asuume $f:\mathbb{C}^{n+1} \to \mathbb{C}$ is a germ of isolated singularity. Then the variation of mixed Hodge structure $(H^n(X_{\infty},\mathbb{C}), F_{\lim}, W(N))$  associated to the cohomology of the Milnor fibers is polarized namely with $S$. In this case the $V$-filtration and the Brieskorn lattice $H''$ which are vector bundles of rank $\mu$ (the Milnor number of $f$) can be defined. Moreover the polarization of the Gauss-Manin system is given by the K. Saito higher residue pairing $P_S$ which a flat bilinear form on the disc. 

\begin{proposition} (\cite{H1} prop. 5.1)
Assume $\{(\alpha_i,d_i)\}$ is the spectrum of a germ of isolated singularity $f:\mathbb{C}^{n+1} \to \mathbb{C}$. There exists elements $s_i \in C^{\alpha_i}$ with the properties

\begin{itemize}

\item[(1)] $s_1,...,s_{\mu}$ project onto a $\mathbb{C}$-basis of $\bigoplus_{-1<\alpha<n} Gr_V^{\alpha} H''/Gr_V^{\alpha} \partial_t^{-1}H''$. 

\item[(2)] $s_{\mu+1}:=0$; there exists a map $\nu:\{1,...,\mu\} \to \{1,...,\mu,\mu+1 \}$ with $(t-(\alpha_i+1)\partial_t^{-1})s_i =s_{\nu(i)}$

\item[(3)] There exists an involution $\kappa:\{1,...,\mu\} \to \{1,...,\mu \}$ with $\kappa=\mu+1-i$ if $\alpha_i \ne \frac{1}{2}(n-1)$ and $\kappa(i)=\mu+1-i$ or $\kappa(i)=i$ if $\alpha_i = \frac{1}{2}(n-1)$, and  

\[ P_S(s_i,s_j)= \pm \delta_{(\mu+1-i)j}.\partial_t^{-1-n} \].

\end{itemize}
\end{proposition}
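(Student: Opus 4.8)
The plan is to reduce the statement to a piece of linear algebra on the $\mu$-dimensional space $C:=\bigoplus_{-1<\alpha<n}Gr_V^{\alpha}\bigl(H''/\partial_t^{-1}H''\bigr)$, carrying its $V$-grading, the two degree $+1$ operators induced by multiplication by $t$ and by $\partial_t^{-1}$ (both preserve $\partial_t^{-1}H''$, hence descend to $C$), and the bilinear form $P_S$ induced by the leading term of K. Saito's higher residue pairing recalled above. First I would record the relevant structure. The operator $\tilde N:=t-(\alpha+1)\partial_t^{-1}$ is of degree $+1$, hence nilpotent on $C$ since only finitely many $C^{\alpha}$ are non-zero; grouping the spectral numbers into the $\mathbb{Z}$-strings $\alpha_0+\mathbb{Z}$ (one per residue class modulo $\mathbb{Z}$), the sub-sum of $C$ over a string is identified with the monodromy eigenspace $H^{n}(X_{\infty},\mathbb{C})_{\lambda}$, $\lambda=\exp(-2\pi i\alpha_0)$, of the canonical fiber, and under this identification $\tilde N$ becomes (up to the usual scalar) the nilpotent logarithm $N$ of the unipotent part of the monodromy; in particular the Jordan type of $\tilde N$ on a string is that of $N$ on the corresponding eigenspace. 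As for the pairing, $P_S$ is flat, multiplication is self-adjoint for it, it is $(-1)^{n+1}$-symmetric up to the involution $\partial_t^{-1}\mapsto-\partial_t^{-1}$, and -- since the polarization of an admissible variation is compatible with the $V$- and weight filtrations -- the induced form on $C$ pairs $C^{\alpha}$ non-degenerately with $C^{\,n-1-\alpha}$, with values in $\mathbb{C}\cdot\partial_t^{-1-n}$, and makes $\tilde N$ an infinitesimal isometry, i.e. $S(Nx,y)+S(x,Ny)=0$ for the induced polarization $S$ of the limit mixed Hodge structure.

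Granting this, I would construct the $s_i$ in two steps. \emph{Conditions (1) and (2).} Decompose $C$ into $\tilde N$-Jordan chains; each chain is a string of homogeneous vectors lying in consecutive $C^{\alpha}$'s. Any such Jordan basis consists of homogeneous $s_i\in C^{\alpha_i}$ spanning $C$, hence projecting to a basis of $\bigoplus Gr_V^{\alpha}H''/Gr_V^{\alpha}\partial_t^{-1}H''$, which is (1); and numbering the chains contiguously so that $\tilde N$ carries each basis vector to its successor in its chain and the top vector to $0$ produces the map $\nu$ with $(t-(\alpha_i+1)\partial_t^{-1})s_i=s_{\nu(i)}$ and $s_{\mu+1}:=0$, which is (2). \emph{Condition (3).} Now I would refine the chain bases using $P_S$. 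Because $P_S$ is non-degenerate and $\tilde N$-invariant, the Jordan block structure of $\tilde N$ is self-dual for $P_S$: each length-$m$ block in the $\alpha$-string is paired non-degenerately with a length-$m$ block in the $(n-1-\alpha)$-string. By the normal form for a nilpotent operator together with a compatible non-degenerate bilinear form (the classification of nilpotent elements of the classical symplectic and orthogonal Lie algebras, via $\mathfrak{sl}_2$-representation theory), one can choose dual homogeneous bases of a block and its partner block; since $P_S$ is graded this forces $P_S(s_i,s_j)=\pm\,\delta_{\kappa(i)\,j}\,\partial_t^{-1-n}$ for an order-reversing involution $\kappa$, and ordering the blocks so that a block and its partner occupy index positions symmetric about the midpoint gives $\kappa(i)=\mu+1-i$ away from the self-dual string $\alpha=\tfrac{n-1}{2}$. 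On that string the form induced on a single Jordan block is symmetric or skew according to the parity of the block length and of $n$, which is exactly the dichotomy forcing $\kappa(i)=\mu+1-i$ or $\kappa(i)=i$. The adjustment is done block by block, so it respects the grading and the chain structure and leaves (1), (2) intact.

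The step I expect to be the main obstacle is precisely this simultaneous normal form: producing one homogeneous basis of $C$ that is at once a Jordan basis for $\tilde N$, anti-diagonalizes $P_S$, and realizes the clean index involution $i\mapsto\mu+1-i$. The sensitive point is the middle string $\alpha=\tfrac{n-1}{2}$, where $P_S$ restricted to an individual Jordan block can be symmetric or skew-symmetric depending on its length and on $n$ -- this is what necessitates the two cases for $\kappa$ -- and, once the basis is fixed, pinning down the exact sign in $P_S(s_i,s_j)=\pm\delta_{(\mu+1-i)j}\partial_t^{-1-n}$ and checking its compatibility with $\tilde N$ being an isometry along each Jordan chain is the last piece of bookkeeping. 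Everything upstream of this -- the $V$-filtration combinatorics, the identification of each $\mathbb{Z}$-string with a monodromy eigenspace of $H^{n}(X_{\infty},\mathbb{C})$, and the flatness and symmetry of $P_S$ -- is a direct application of the structure of admissible variations and mixed Hodge modules recalled in Sections 2--6 together with the higher residue pairing formalism for isolated hypersurface singularities.
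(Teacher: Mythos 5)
There is a genuine gap, and it sits exactly where the real content of the proposition lies. Your reduction treats condition (1) as a freebie: you decompose a space you call $C$ into Jordan chains for $\tilde N=t-(\alpha+1)\partial_t^{-1}$ and assert that \emph{any} homogeneous Jordan basis ``projects to a basis of $\bigoplus Gr_V^{\alpha}H''/Gr_V^{\alpha}\partial_t^{-1}H''$, which is (1).'' That is false unless the basis is already known to lie in $Gr_V^{\bullet}H''$ and to split the quotient, and nothing in Jordan theory for $(\tilde N,P_S)$ forces this. The elements $s_i$ of the proposition live in $C^{\alpha_i}=Gr_V^{\alpha_i}\mathcal{G}$ (graded pieces of the full Gauss--Manin system, identified with monodromy eigenspaces of $H^n(X_{\infty},\mathbb{C})$), while condition (1) pins down their position relative to the Brieskorn lattice. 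Your own setup blurs this: if $C$ is literally the quotient $\bigoplus Gr_V^{\alpha}(H''/\partial_t^{-1}H'')$, then $\partial_t^{-1}$ descends to the zero map there, $\tilde N$ degenerates to multiplication by $t$, and condition (2) --- an identity in $C^{\alpha_i+1}\subset Gr_V\mathcal{G}$ --- cannot even be formulated. The crux of the proposition is the simultaneous compatibility of three structures: the $V$-grading, the lattice $H''$, and the pair $(N,S)$. The input that makes this possible is precisely the opposite-filtration property $U^pC^{\alpha}=C^{\alpha}\cap V^{\alpha+p}H''$ opposite to $F$ (Proposition 6.2 in the text, from \cite{SAI6}), together with the existence of an $N$-stable, $S$-self-dual splitting refining it; this is M.~Saito's and Hertling's good/very good basis theorem, not an application of the normal form for a nilpotent element of an orthogonal or symplectic Lie algebra. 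Your step (3) machinery (self-dual Jordan blocks, the symmetric/skew dichotomy on the middle string $\alpha=\tfrac{n-1}{2}$) is the right linear algebra for producing $\kappa$ \emph{once} a lattice-compatible $N$- and $S$-adapted splitting exists, but it cannot manufacture that splitting.

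For comparison: the paper itself does not prove the proposition --- it cites \cite{H1}, Prop.~5.1 --- and its only indication of the argument is that the proof rests on the isomorphism $\Phi:H^n(X_{\infty},\mathbb{C})\to\Omega_f$ transporting the limit MHS, its conjugation and its polarization to $\Omega_f$, combined with the opposite-filtration statement of Proposition 6.2. That is exactly the ingredient your proposal omits, so the fix is to insert it: first establish (or quote) that the filtration induced by $H''$ on each $C^{\alpha}$ is opposite to the limit Hodge filtration and can be chosen compatibly with $N$ and $S$, and only then run your Jordan-chain and duality bookkeeping inside the resulting bigraded splitting.
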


\noindent
The basis discussed in 6.1 is usually called a good basis. The condition (1) correspond to the notion of opposite filtrations. Two filtrations $F$ and $U$ on $\mathcal{G}$ are called opposite (cf. \cite{SAI6} sec. 3) if 

\[ Gr_p^FGr_U^q \mathcal{G}=0 , \qquad \text{for} \ p \ne q \]

In our situation this amounts to a choice of a section $s:H''/\partial_t^{-1}H'' \to H''$ of the projection $pr:H'' \to H''/\partial_t^{-1}H''$ and $Image(s)$ generates $\oplus_{\alpha} (H'' \cap C^{\alpha})$.

\noindent
$V^{\alpha} H''$ is the submodule generated by $s(V^{\alpha}\Omega_f)$.
 
\begin{proposition} (\cite{SAI6} prop. 3.5)
The filtration 

\[ U^pC^{\alpha}:=C^{\alpha} \cap V^{\alpha+p}H'' \]

is opposite to the filtration Hodge filtration $F$.
\end{proposition}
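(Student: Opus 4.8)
\medskip
\noindent\textbf{Sketch of the intended proof.} Opposedness, $Gr^p_FGr^q_UC^{\alpha}=0$ for $p\neq q$, is a property of the pair $(F,U)$ on each graded piece $C^{\alpha}$ separately, so the plan is to describe both filtrations on a fixed $C^{\alpha}$ in terms of the $V$-filtration on $\Omega_f=\mathcal{G}_0/\mathcal{G}_{-1}=H''/\partial_t^{-1}H''$ with the help of a good basis $\{s_i\}$ as in Proposition~6.1. It suffices to exhibit a single grading of $C^{\alpha}$ splitting $F$ and $U$ at once — or, equivalently, to check that $\dim Gr_F^{p}C^{\alpha}=\dim Gr_U^{p}C^{\alpha}$ for each $p$ together with $F^{p+1}C^{\alpha}\cap U^{p'}C^{\alpha}=0$ for the complementary index $p'$.

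First I would make $F$ explicit. Example~3.3 gives, via the isomorphism induced by a suitable power of $\partial_t$ (equivalently of $\tau$),
\[
Gr_F^{n-p}C^{\alpha}\;\cong\;Gr_V^{\alpha+p}(\mathcal{G}_0/\mathcal{G}_{-1})=Gr_V^{\alpha+p}\Omega_f .
\]
Since $V^{\beta}\Omega_f$ is the image of $V^{\beta}\mathcal{G}\cap\mathcal{G}_0$, condition~(1) of Proposition~6.1 — that the $s_i$ project onto a $V$-adapted basis of $\bigoplus_{-1<\alpha<n}Gr_V^{\alpha}\Omega_f$ — identifies each $Gr_V^{\beta}\Omega_f$ with the span of the $\overline{s_i}$ with $\alpha_i=\beta$. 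Transporting this back, the Hodge filtration on $C^{\alpha}$ is entirely governed by the exponents $\alpha_i$ and by the power of $\partial_t$ needed to land in $C^{\alpha}$.

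Next I would do the same for $U$. By the description recalled just above, $V^{\alpha+p}H''$ is the submodule of $H''$ generated by $s(V^{\alpha+p}\Omega_f)$, hence is built from the $s_i$ with $\alpha_i\ge\alpha+p$ together with their images under the iterated action of the generators of $H''$. The good-basis recursion $(t-(\alpha_i+1)\partial_t^{-1})s_i=s_{\nu(i)}$ of Proposition~6.1 lets one follow how these generators travel through the $V$-filtration and the Brieskorn (pole-order) filtration, and so compute $C^{\alpha}\cap V^{\alpha+p}H''$: it turns out to be controlled by the very same exponents $\alpha_i$ that controlled $F$, but ordered from the opposite end. Comparing the two descriptions yields $Gr_p^FGr_q^UC^{\alpha}=0$ for $p\neq q$. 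Equivalently, this simply realizes the chosen good section as a choice of filtration opposite to $F$, in the sense recalled above.

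The step I expect to be the real obstacle is the second one: one must keep careful track of how multiplication by $t$ and by $\partial_t^{-1}$ shift the $V$-index against the pole-order index, and must verify that replacing $s(V^{\alpha+p}\Omega_f)$ by the submodule it generates introduces no lower-order corrections that would alter $C^{\alpha}\cap V^{\alpha+p}H''$. This is precisely where the goodness of $\{s_i\}$ enters: the recursion in Proposition~6.1, together with $\partial_t^{-1}\mathcal{G}_0=\mathcal{G}_{-1}$ and the strictness properties of the $V$-filtration on the Brieskorn lattice, is what upgrades the obvious inclusion ``$U^{\bullet}$ refines the $F$-splitting'' to the equality of gradings that oppositeness demands. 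One should also confirm that the range of $p$ with $U^pC^{\alpha}\neq0$ matches that of the nonzero $Gr_F$, which is forced by the bound $-1<\alpha_i<n$ on the exponents.
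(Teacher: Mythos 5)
The first thing to say is that the paper itself contains no proof of this proposition: it is imported verbatim as \cite{SAI6}, Prop.~3.5, and the text immediately moves on to the isomorphism $\Phi$ used for Theorem~6.3. So there is no in-paper argument to compare yours against; what follows is an assessment of your sketch on its own terms.

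Your outline assembles the right ingredients and is consistent with both the surrounding machinery of the paper and with how Saito's argument actually runs: the reduction of opposedness to a single splitting grading of $C^{\alpha}$ is standard; the identification $Gr_F^{n-p}C^{\alpha}\cong Gr_V^{\alpha+p}(\mathcal{G}_0/\mathcal{G}_{-1})$ via $\tau^p$ is exactly the displayed isomorphism of Example~3.3; and reading $U^pC^{\alpha}=C^{\alpha}\cap V^{\alpha+p}H''$ off the good section $s$ and the recursion $(t-(\alpha_i+1)\partial_t^{-1})s_i=s_{\nu(i)}$ is the correct mechanism. However, as you yourself flag, the decisive step is not carried out: you assert, but do not verify, that the submodule generated by $s(V^{\alpha+p}\Omega_f)$ meets $V^{\alpha}\mathcal{G}$ (modulo $V^{>\alpha}\mathcal{G}$) in a subspace governed by the same exponents $\alpha_i$ ``from the opposite end,'' with no lower-order corrections coming from the $t$- and $\partial_t^{-1}$-action. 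That verification — tracking how $t$ and $\partial_t^{-1}$ shift the $V$-index against the pole-order index, and using $\partial_t^{-1}\mathcal{G}_0=\mathcal{G}_{-1}$ together with condition~(1) of Proposition~6.1 to rule out cross terms — is the entire content of the proposition; without it the ``obvious inclusion'' you mention has not been upgraded to the equality of gradings that opposedness requires. So the proposal is a correct and well-aimed outline, but the core computation remains to be done, and for that one must still go to \cite{SAI6}.
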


The proof of the theorem 6.1 is based on construction of a $\mathbb{C}$-linear isomorphism, 

\begin{center}
$\Phi:H^n(X_{\infty},\mathbb{C}) \to \Omega_f\cong \Omega^{n+1}/df \wedge \Omega^n$
\end{center}

The mixed Hodge structure on $\Omega_f$ is defined by using the isomorphism $\Phi$. This means that 
\begin{center}
$W_k(\Omega_f)=\Phi W_kH^n(X_{\infty},\mathbb{Q}),  
 \qquad F^p(\Omega_f)= \Phi F^pH^n(X_{\infty},\mathbb{C})$ 
\end{center}

and all the data of the Steenbrink MHS on $H^n(X_{\infty},\mathbb{C})$ such as the $\mathbb{Q}$ or $\mathbb{R}$-structure is transformed via the isomorphism $\Phi$ to that of $\Omega_f$. Specifically; in this way we also obtain a conjugation map

\begin{equation}
\bar{.}:\Omega_{f,\mathbb{Q}} \otimes \mathbb{C} \to \Omega_{f,\mathbb{Q}} \otimes \mathbb{C}, \qquad \Omega_{f,\mathbb{Q}}:=\Phi^{-1}H^n(X_{\infty},\mathbb{Q})
\end{equation}

defined from the conjugation on $H^n(X_{\infty},\mathbb{C})$ via this isomorphism. 

\begin{theorem}  \cite{R} 
Assume $f:(\mathbb{C}^{n+1},0) \to (\mathbb{C},0)$, is a holomorphic germ with isolated singularity at $0$. Then, the isomorphism $\Phi$ makes the following diagram commutative up to a complex constant;

\begin{equation}
\begin{CD}
\widehat{Res}_{f,0}:\Omega_f \times \Omega_f @>>> \mathbb{C}\\
@VV(\Phi^{-1},\Phi^{-1})V                   @VV \times *V \\
S:H^n(X_{\infty}) \times H^n(X_{\infty}) @>>> \mathbb{C}
\end{CD} \qquad \qquad  * \ne 0
\end{equation}

\vspace{0.3cm} 

\noindent
where, 

\[ \widehat{Res}_{f,0}=\text{res}_{f,0}\ (\bullet,\tilde{C}\ \bullet) \]

\vspace{0.3cm} 

\noindent
$\text{res}_{f,0}$ is the Grothendieck residue associated to $f$, and $\tilde{C}$ is defined relative to the Deligne decomposition of $\Omega_f$, via the isomorphism $\Phi$. If $J^{p,q}=\Phi^{-1} I^{p,q}$ is the corresponding subspace of $\Omega_f$, then

\begin{equation}
\Omega_f=\displaystyle{\bigoplus_{p,q}}J^{p,q} \qquad \tilde{C}|_{J^{p,q}}=(-1)^{p} 
\end{equation}

In other words;
\begin{equation}
S(\Phi^{-1}(\omega),\Phi^{-1}(\eta))= * \times \ \text{res}_{f,0}(\omega,\tilde{C}.\eta), \qquad 0 \ne * \in \mathbb{C}
\end{equation}
\end{theorem}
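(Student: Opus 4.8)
The plan is to compare two non-degenerate pairings on the same finite-dimensional vector space $\Omega_f\cong \Omega^{n+1}/df\wedge\Omega^n$ by transporting everything to the cohomology $H^n(X_\infty,\mathbb{C})$ via the isomorphism $\Phi$, and then using the known description of the Steenbrink polarization $S$ together with the Grothendieck local duality description of $\widehat{\mathrm{Res}}_{f,0}$. First I would recall the two key structural inputs. On one side, by Theorem \ref{...} (the Schmid-type statement, Theorem 5.8 in the excerpt) the polarization $S$ on the limit mixed Hodge structure is, up to the operator $C$ coming from the Deligne splitting $H^n(X_\infty,\mathbb{C})=\bigoplus I^{p,q}$, the ``geometric'' pairing obtained from cup product and the nilpotent $N$; in particular $S(\bullet,C\,\bullet)$ is, on each primitive piece $P I^{p,q}$, a positive form. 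On the other side, Proposition 6.1 (the good-basis/good-section statement of Hertling) and Proposition 6.3 identify the K.~Saito higher residue pairing $P_S$ with the Grothendieck residue $\mathrm{res}_{f,0}$ on the quotient $H''/\partial_t^{-1}H''\cong\Omega_f$, up to the factor $\partial_t^{-1-n}$, and they pin down how the involution $\kappa$ (hence the sign pattern $\delta_{(\mu+1-i)j}$) interacts with the spectral numbers $\alpha_i$.

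The actual argument then runs as follows. Using $\Phi$, transport $S$ to $\widetilde S:=S(\Phi^{-1}\bullet,\Phi^{-1}\bullet)$ on $\Omega_f$; by construction of the MHS on $\Omega_f$ (the displayed definition $W_k(\Omega_f)=\Phi W_k$, $F^p(\Omega_f)=\Phi F^p$) this is compatible with the Hodge and weight filtrations on $\Omega_f$, and the conjugation $\bar{\ }$ on $\Omega_{f,\mathbb{Q}}\otimes\mathbb{C}$ defined in (8) makes $(\Omega_f,\widetilde S,\bar{\ })$ a polarized MHS isomorphic to the Steenbrink one. Next, compute the Grothendieck residue side: $\mathrm{res}_{f,0}$ restricted to $H''\cap C^\alpha$ and pushed to $\mathrm{Gr}_V^\alpha$ is, by Proposition 6.1(3) rewritten through $\Phi$, the pairing $\pm\delta_{(\mu+1-i)j}$ on a good basis. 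The point is that $\mu+1-i\leftrightarrow j$ exchanges $\mathrm{Gr}_V^{\alpha}$ with $\mathrm{Gr}_V^{n-\alpha}$, i.e. $F^p$ with $F^{n-p}$, which is exactly the index pattern of $S$ on a Hodge structure of weight $n$; so $\mathrm{res}_{f,0}$ and $\widetilde S$ pair the same graded pieces. Thus $\widetilde S$ and $\mathrm{res}_{f,0}(\bullet,\widetilde C\,\bullet)$ — where $\widetilde C=\Phi^{-1}C\Phi$ acts by $(-1)^p$ on $J^{p,q}=\Phi^{-1}I^{p,q}$, as in (10) — agree up to scaling on each bigraded primitive component $P J^{p,q}$.

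It remains to upgrade ``agree up to scaling on each $P J^{p,q}$'' to ``agree up to a single global nonzero constant $*$''. Here I would use that both forms are $(-1)^n$-symmetric (resp.\ carry the $\mathrm{res}_{f,0}$ symmetry), both are $S$-flat / compatible with the same $N$, and both dualize the Lefschetz decomposition relative to the \emph{same} nilpotent operator (the weight filtration is intrinsic, $W_k(\Omega_f)=\Phi W_k$); the remark after Theorem 5.8 in the excerpt — ``two bilinear forms are isomorphic iff the sets of graded polarizations are isomorphic'' — says precisely that a form polarizing a fixed Lefschetz module is determined by its restrictions to the primitive pieces up to scalars on those pieces, and the compatibility with $N$ forces those scalars on the primitive pieces of one weight-$k$ block to coincide (since $N^m$ carries $P J^{p,q}$ isomorphically onto the corresponding piece in $\mathrm{Gr}_{k}^W$, and both $\widetilde S$ and $\mathrm{res}_{f,0}(\bullet,\widetilde C\bullet)$ transform the same way under $N$). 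Finally, a variation-of-Hodge-structure/rigidity argument (or directly the flatness $dS(\lambda_1,\lambda_2)=S(\nabla\lambda_1,\lambda_2)+S(\lambda_1,\nabla\lambda_2)$ together with the analogous property of the higher residue pairing) shows the block-scalars are all equal, giving the single constant $*\neq 0$ in (9). The main obstacle I expect is exactly this last bookkeeping step: matching the sign $(-1)^p$ in $\widetilde C$ against the sign $\pm$ in Proposition 6.1(3) and the $(-1)^{j(j-1)/2}$-type factors hidden in the Lefschetz normalization, and checking that the resulting scalars are forced to be a single constant rather than one constant per spectral block; everything else is a transport-of-structure formality once Theorems 5.8 and the Hertling/Saito propositions are invoked.
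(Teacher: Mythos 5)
The paper does not actually reprove this theorem: it imports it from \cite{R}, and the only indication it gives of how the proof goes is Remark 6.4, where the K.~Saito higher residue pairing is presented by oscillating integrals $\psi_s^i(\omega,\tau)=\int_{\Gamma(i)}e^{-\tau f}\omega$ over Lefschetz thimbles and the text says this expression ``is the same as in the proof.'' In other words, the route in \cite{R} is analytic: both $S$ and $\mathrm{res}_{f,0}$ are realized as coefficients in the asymptotic expansion $\sum_r\mathcal{K}_s^r([\zeta],[\zeta'])\tau^{-n-r}$ of the same thimble integrals (the Pham/Varchenko/Barlet--Varchenko circle of ideas), and the identity with its single constant drops out of that computation. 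Your proposal replaces this input by a formal transport-of-structure and rigidity argument, and that is where it breaks.

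The decisive gap is your step ``agree up to scaling on each bigraded primitive component $PJ^{p,q}$.'' Knowing that $\widetilde S$ and $\mathrm{res}_{f,0}(\bullet,\widetilde C\bullet)$ both pair $F^p$ against $F^{n-p}$, are both infinitesimal isometries for the same $N$, and both dualize the same Lefschetz decomposition only shows that each restricts to a non-degenerate form on each primitive piece; it does not make those restrictions proportional. The spaces $PJ^{p,q}$ have dimension $>1$ in general, and non-degenerate (anti)symmetric forms on a space of dimension $\ge 2$ are not unique up to scalar; there is no irreducible module structure on $PJ^{p,q}$ that would give a Schur-type rigidity, and the sentence after Theorem 5.2 that you lean on (``two bilinear forms are isomorphic iff the graded polarizations are isomorphic'') presupposes an identification of the primitive forms rather than producing one. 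The equality on the primitive pieces is precisely the nontrivial content of the theorem and must be computed, which is what the oscillating-integral representation does. The final step is also unsupported: the flatness identity $dS(\lambda_1,\lambda_2)=S(\nabla\lambda_1,\lambda_2)+S(\lambda_1,\nabla\lambda_2)$ relates values of $S$ at nearby points of the base, not distinct primitive blocks inside the single vector space $\Omega_f$, so nothing in your argument forces the constants on $PJ^{p,q}$ and $PJ^{p',q'}$ to coincide into one global $*$. Both missing ingredients are supplied in \cite{R} by the explicit integral formula, not by rigidity.
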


\begin{remark} (\cite{PH} page 37)
Setting
\begin{center}
$\psi_s^i (\omega,\tau)=\displaystyle{\int_{\Gamma(i)}e^{-\tau f}\omega}$\\[0,2cm]
$\bar{\psi}_s^i(\omega^{\prime},\tau)=\displaystyle{\int_{\Gamma^{\prime}(i)}e^{+\tau f}\omega^{\prime}}$
\end{center}

with $\zeta=\frac{\omega}{df},\ \zeta^{\prime}=\frac{\omega^{\prime}}{df}$, the expression (which is the same as in the proof)

\begin{equation}
\mathcal{K}_s([\zeta],[\zeta^{\prime}])(\tau)= \displaystyle{\sum_{i=1}^{\mu}}\psi_s^i(\tau,\omega)\bar{\psi}_s^i(\tau,\omega^{\prime})=\displaystyle{\sum_{r=0}^{\infty}}\mathcal{K}_s^r([\zeta],[\zeta^{\prime}])(\tau).\tau^{-n-r}
\end{equation}

is a presentation of K. Saito higher residue pairing.

\end{remark}

\begin{corollary}
The polarization $S$ of $H^n(X_{\infty})$ will always define a polarization of $\Omega_f$, via the isomorphism $\Phi$. In other words $S$ is also a polarization in the extension.
\end{corollary}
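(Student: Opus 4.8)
The plan is to obtain the corollary in two moves: a purely formal transport of structure along $\Phi$, and then an appeal to the $\widehat{Res}_{f,0}$--$S$ identity established above to realise the resulting form explicitly. Recall first what has to be checked: a polarization of the mixed Hodge structure $(H^n(X_\infty,\mathbb{C}),F_{\lim},W(N))$ is a $(-1)^n$-symmetric bilinear form $S$ that is rational on the $\mathbb{Q}$-structure, satisfies $S(W_a,W_b)=0$ whenever $a+b<2n$, makes $N$ an infinitesimal isometry with $W$ its relative monodromy filtration, and for which $S(\,\cdot\,,N^{\ell}\,\cdot\,)$ induces on the primitive part of $Gr^W_{n+\ell}$ a polarization of that pure Hodge structure with the standard sign. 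Each of these is a condition phrased solely in terms of the data $(\mathbb{Q}\text{-structure},F,W,N)$ together with complex conjugation.

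The first move is then immediate from the way the mixed Hodge structure on $\Omega_f$ was defined above: by construction $W_k(\Omega_f)=\Phi\,W_kH^n(X_\infty,\mathbb{Q})$, $F^p(\Omega_f)=\Phi\,F^pH^n(X_\infty,\mathbb{C})$, $\Omega_{f,\mathbb{Q}}=\Phi^{-1}H^n(X_\infty,\mathbb{Q})$, and the conjugation on $\Omega_{f,\mathbb{Q}}\otimes\mathbb{C}$ is the one transported through $\Phi$; hence $\Phi$ is, by fiat, an isomorphism of mixed Hodge structures intertwining the conjugations, carrying $N$ to $N_{\Omega_f}:=\Phi N\Phi^{-1}$ whose shifted monodromy weight filtration is $W_\bullet(\Omega_f)$. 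Consequently the pulled-back form $S^{\Phi}(\,\cdot\,,\,\cdot\,):=S(\Phi^{-1}\,\cdot\,,\Phi^{-1}\,\cdot\,)$ on $\Omega_f$ inherits, term by term, each of the defining properties above, so $S^{\Phi}$ is a polarization of the mixed Hodge structure on $\Omega_f$. Since in addition $S$ is flat for the Gauss--Manin connection (the identity $dS(\lambda_1,\lambda_2)=S(\nabla\lambda_1,\lambda_2)+S(\lambda_1,\nabla\lambda_2)$) and $\Phi$ is built compatibly with the $V$-filtration and the Brieskorn lattice $H''$, the same $S^{\Phi}$ polarizes the extended object --- the Gauss--Manin system together with its lattice --- and not just the single limit fibre; this is exactly the assertion that $S$ is also a polarization in the extension.

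The second move records the concrete form of this polarization: by the $\widehat{Res}_{f,0}$--$S$ identity proved above, $S^{\Phi}=*\cdot\widehat{Res}_{f,0}$ for some $*\in\mathbb{C}^{\times}$, where $\widehat{Res}_{f,0}=\mathrm{res}_{f,0}(\,\cdot\,,\tilde C\,\cdot\,)$ and $\tilde C$ acts by $(-1)^{p}$ on the Deligne summand $J^{p,q}$. Thus the twisted Grothendieck residue $\mathrm{res}_{f,0}(\,\cdot\,,\tilde C\,\cdot\,)$ is, up to the scalar $*$, exactly the polarization; the constant $*$ causes no trouble because $S^{\Phi}$ is already known to be a polarization from the first move, and may be normalised away should a positive real factor be desired. (The same pairing, as noted in the remark above, appears as the leading term of K. Saito's higher residue pairing.)

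The one substantive input --- and it is precisely what the $\widehat{Res}_{f,0}$--$S$ theorem supplies, so we may take it for granted --- is the sign matching: that the Weil-type operator $\tilde C=(-1)^{p}$ twisting $\mathrm{res}_{f,0}$ is exactly the operator that converts the Grothendieck residue into a form with the correct positivity on the primitive pieces of every $Gr^W_{n+\ell}(\Omega_f)$ in the mixed, not merely pure, setting. With that theorem granted, no further positivity computation is needed and the corollary follows formally.
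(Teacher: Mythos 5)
Your proof is correct and follows essentially the route the paper intends: the mixed Hodge structure on $\Omega_f$ (filtrations, $\mathbb{Q}$-structure, conjugation, nilpotent operator) is defined by transport along $\Phi$, so the pulled-back form $S^{\Phi}=S(\Phi^{-1}\cdot,\Phi^{-1}\cdot)$ tautologically inherits every polarization axiom, and Theorem 6.3 then identifies $S^{\Phi}$ with $*\cdot\widehat{Res}_{f,0}$, which is exactly the content of the corollary. The paper leaves this implicit; your write-up merely makes the formal transport-of-structure step and the appeal to the $\widehat{Res}_{f,0}$--$S$ identity explicit.
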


The Riemann-Hodge bilinear relations for the MHS on $\Omega_f$ and its polarization $\widehat{Res}$ would be that of an opposite MHS to $(H^n(X_{\infty}),S)$. 

\begin{corollary} (Riemann-Hodge bilinear relations for $\Omega_f$)
Assume $f:\mathbb{C}^{n+1} \to \mathbb{C}$ is a holomorphic germ with isolated singularity. Suppose $\mathfrak{f}$ is the corresponding map to $N$ on $H^n(X_{\infty})$, via the isomorphism $\Phi$. Define 

\[ P_l=PGr_l^W:=\ker(\mathfrak{f}^{l+1}:Gr_l^W\Omega_f \to Gr_{-l-2}^W\Omega_f) \]

Going to $W$-graded pieces;

\begin{equation}
\widehat{Res}_l: PGr_l^W \Omega_f \otimes_{\mathbb{C}} PGr_l^W \Omega_f \to \mathbb{C}
\end{equation}

is non-degenerate and according to Lefschetz decomposition 

\[ Gr_l^W\Omega_f=\bigoplus_r \mathfrak{f}^r P_{l-2r} \]

\noindent
we will obtain a set of non-degenerate bilinear forms,

\begin{equation}
\widehat{Res}_l \circ (id \otimes \mathfrak{f}^l): P Gr_l^W \Omega_f  \otimes_{\mathbb{C}} P Gr_l^W \Omega_f  \to \mathbb{C}, 
\end{equation} 

\begin{equation}
\widehat{Res}_l=res_{f,0}\ (id \otimes \tilde{C} .\  \mathfrak{f}^l)
\end{equation}

Then, 

\begin{itemize}
\item $\widehat{Res}_l(x,y)=0, \qquad x \in P_r, \ y  \in P_s, \ r \ne s $

\item If $x \ne 0$ in $P_l$, 

\[ const \times res_{f,0}\ (C_lx,\tilde{C} .\  \mathfrak{f}^l .\bar{x})>0  \]

where $C_l$ is the corresponding Weil operator.

\end{itemize}

\end{corollary}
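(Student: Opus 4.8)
The plan is to obtain all the assertions by transporting the Riemann--Hodge bilinear relations of the polarized limit mixed Hodge structure $(H^n(X_{\infty}),F_{\lim},W(N),S)$ to $\Omega_f$ along the isomorphism $\Phi$, using the comparison of pairings established above (the commutative square of \cite{R} relating $\widehat{Res}_{f,0}$ and $S$). Recall from the construction of $\Phi$ in Section 6 that $\Phi$ carries $W_{\bullet}H^n(X_{\infty})$ onto $W_{\bullet}\Omega_f$ and $F^{\bullet}H^n(X_{\infty})$ onto $F^{\bullet}\Omega_f$, that it transforms the conjugation of $H^n(X_{\infty},\mathbb{C})$ into the conjugation on $\Omega_{f,\mathbb{Q}}\otimes\mathbb{C}$ defined via $\Phi$, and that $\mathfrak f=\Phi\circ N\circ\Phi^{-1}$, while $C_l=\Phi\circ C\circ\Phi^{-1}$ on the relevant graded piece, $C$ being the Weil operator. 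Hence the whole $(N,W)$-linear algebra transports verbatim: the hard Lefschetz isomorphisms $\mathfrak f^{\,l}\colon Gr_l^W\Omega_f\xrightarrow{\ \sim\ }Gr_{-l}^W\Omega_f$, the primitive subspaces $P_l=\ker(\mathfrak f^{\,l+1})$, and the decomposition $Gr_l^W\Omega_f=\bigoplus_r\mathfrak f^{\,r}P_{l-2r}$ are the $\Phi$-images of the corresponding objects on $H^n(X_{\infty})$, which is precisely the Lefschetz decomposition claimed in the statement.

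Next I record the analytic input. Since $f$ has an isolated singularity, $(H^n(X_{\infty}),F_{\lim},W(N))$ together with $S$ is a polarized mixed Hodge structure (it underlies the admissible polarized VMHS discussed in Section 6). By Schmid's $\mathrm{SL}_2$-orbit theorem and the ensuing Riemann--Hodge bilinear relations \cite{SCH}: on each $Gr_l^W H^n(X_{\infty})$ the primitive decomposition is $S$-orthogonal in the usual twisted sense; each $P_l$ carries a pure Hodge structure; the form $S_l:=(-1)^{l(l-1)/2}\,S(\,\cdot\,,N^l\,\cdot\,)$ polarizes $P_l$, so in particular it is non-degenerate and the Hermitian form $S_l(C\,\cdot\,,\overline{\ \cdot\ })$ is positive definite on $P_l$; and primitive classes coming from distinct graded quotients pair to zero under $S$.

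Now I combine the two. The comparison theorem provides a constant $*\in\mathbb{C}^{\times}$ with $\widehat{Res}_{f,0}(\omega,\eta)=*\cdot S(\Phi^{-1}\omega,\Phi^{-1}\eta)$ and $\widehat{Res}_{f,0}=\text{res}_{f,0}(\,\cdot\,,\tilde C\,\cdot\,)$, where $\tilde C$ equals $(-1)^p$ on $J^{p,q}=\Phi^{-1}I^{p,q}$. Restricting this identity and passing to $W$-graded pieces yields $\widehat{Res}_l=*\cdot S_l\circ(\Phi^{-1}\otimes\Phi^{-1})$ on $Gr_l^W\Omega_f$, whence $\widehat{Res}_l\circ(\id\otimes\mathfrak f^{\,l})=*\cdot\bigl(S(\,\cdot\,,N^l\,\cdot\,)\circ(\Phi^{-1}\otimes\Phi^{-1})\bigr)$; unwinding $\widehat{Res}_{f,0}=\text{res}_{f,0}(\,\cdot\,,\tilde C\,\cdot\,)$ this is exactly the displayed identity $\widehat{Res}_l=\text{res}_{f,0}(\id\otimes\tilde C\,\mathfrak f^{\,l})$. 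Non-degeneracy of $\widehat{Res}_l$ on $PGr_l^W\Omega_f$, and of $\widehat{Res}_l\circ(\id\otimes\mathfrak f^{\,l})$, follows from the corresponding non-degeneracy on $H^n(X_{\infty})$ since $*\neq 0$; and $\widehat{Res}_l(x,y)=0$ for $x\in P_r$, $y\in P_s$, $r\neq s$, is the $\Phi$-image of the orthogonality of distinct primitive pieces for $S$. Finally, for $0\neq x\in P_l$ put $y=\Phi^{-1}x\in P_l H^n(X_{\infty})$; using that $\Phi$ intertwines $N$ with $\mathfrak f$, conjugation with conjugation and $C$ with $C_l$, the comparison theorem gives $\text{res}_{f,0}\!\bigl(C_l x,\ \tilde C\,\mathfrak f^{\,l}\bar x\bigr)=\tfrac{1}{*}\,S\!\bigl(Cy,\ N^l\bar y\bigr)$, and the right-hand side is positive by the polarization property of $S_l$; multiplying by a suitable real constant absorbing $*$ and the sign $(-1)^{l(l-1)/2}$ gives the asserted positivity.

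The main obstacle is the sign and normalization bookkeeping: one must reconcile the Grothendieck residue normalization of $\tilde C$ (equal to $(-1)^p$ on $J^{p,q}$) with the Hodge-theoretic Weil operator $C_l$ (equal to $i^{p-q}$ on $I^{p,q}$), and control the constant $*$ of the comparison theorem together with the sign $(-1)^{l(l-1)/2}$, so that the scalar appearing as ``$const$'' in the statement has the sign making $\text{res}_{f,0}(C_l x,\tilde C\,\mathfrak f^{\,l}\bar x)$ strictly positive on each primitive piece. A secondary point is that one genuinely needs $\Phi$ to intertwine $N$ with $\mathfrak f$ and the two conjugations \emph{exactly}, not merely up to scalars, which is where the explicit construction of $\Phi$ in Section 6 (together with the good basis of Proposition 6.1) must be used rather than just the statement of the comparison theorem.
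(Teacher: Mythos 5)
Your proposal is correct and follows essentially the same route the paper intends: the corollary is presented there as an immediate consequence of Theorem 6.3 (the comparison $S(\Phi^{-1}\omega,\Phi^{-1}\eta)=*\cdot\mathrm{res}_{f,0}(\omega,\tilde C\eta)$) together with the classical Riemann--Hodge bilinear relations for the polarized limit MHS, transported along $\Phi$. Your explicit bookkeeping of how $\Phi$ intertwines $W$, $F$, $N$ with $\mathfrak f$, the conjugations, and the Weil operators, and your remark that the undetermined constant $*$ is absorbed into the ``const'' of the positivity statement, simply fills in details the paper leaves implicit.
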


\begin{example}
For instance by taking $f=x^3+y^4$, then as basis for Jacobi ring, we choose 
\begin{center}
$z^i:\ 1, \ y, \ x, \ y^2, \ xy, \ xy^2$ 
\end{center}

They correspond to the top forms with degrees 

\begin{center}
$l(i): \ 7/12, \ 10/12, \ 11/12, \ 13/12, \ 14/12,\ 17/12$
\end{center} 

describing the pole (Hodge) filtration on the $\mathcal{H}:=R^nf_* \mathbb{C}$, respectively. The above basis projects onto a basis

\begin{center}
$\displaystyle{\bigoplus_{-1 < \alpha=l(i) -1 < n}Gr_{\alpha}^VH'' \twoheadrightarrow Gr_V \Omega_f}$ 
\end{center}

as in Theorem 6.1. The Hodge filtration is explained as follows. First, we have $h^{1,0}=h^{0,1}=3$. Therefore, because $\Phi$ is an isomorphism. 

\[ <1 .\omega,\ y. \omega, \ x .\omega > =\Omega_f^{0,1}, \qquad <y^2. \omega, \ xy.\omega  , \ xy^2.\omega > =\Omega_f^{1,0} \]

where $\omega =dx \wedge dy$, and the Hodge structure is pure, because $Gr_2^WH^n(X_{\infty})=0$.

\begin{center}
$\overline{<1. dx \wedge dy, \ y. dx \wedge dy, \ x. dx \wedge dy>}= $ \\ $<c_1. xy^2 .dx \wedge dy, \ xy .dx \wedge dy , \ y^2 .dx \wedge dy> $. 
\end{center}
\end{example}

\section{Hermitian duality}

In the following we explain a method of descent of duality for $D$-modules, originally belonged to C. Sabbah and M. Saito cf. \cite{SA4}, \cite{SAI3}. 
Assume $X=Z \times \mathbb{C}$, where $Z$ a complex manifold identified with $Z =Z \times 0$, and Let $M$ be a holonomic $D_X$-module. Define 

\begin{equation}
M_{\alpha,p}:=\displaystyle{\bigoplus_{k=0}^p M[t^{-1}] \otimes e_{\alpha,k}}
\end{equation}

with $e_{\alpha,k}=0$ for $k<0$ and $e_{\alpha,k}=t^{\alpha}(\log t)^k/k!$ otherwise. We have natural maps

\begin{center}
$... \leftrightarrows M_{\alpha,p} \leftrightarrows M_{\alpha,p+1} \leftrightarrows M_{\alpha,p+2} \to ...$
\end{center} 

where the composite of the two converse arrow is nilpotent. Then the maps,

\begin{center}
$Gr_{\alpha}^V M \to GR_{-1}^V M_{\alpha,p}, \qquad 
 m_0 \mapsto \bigoplus_{k=0}^p [-(\partial_t t +\alpha)]^k m_0 \otimes e_{\alpha,k}$\\[0.5cm]
$Gr_{-1}^V M_{\alpha,p} \to Gr_{\alpha}^V M, \qquad
\sum_{k=0}^p m_k \otimes e_{\alpha,k} \mapsto \sum_{k=0}^p [-(\partial_t t +\alpha)]^k m_{p-k}$
\end{center}

for $p$ large enough induce isomorphisms;

\begin{center}
$Coker(t \partial_t) \cong Gr_{\alpha}^VM  \cong \ker(t \partial_t)$
\end{center}

The limit is called moderate nearby cycle module, denoted $\psi_{t,\lambda}^{mod} M$, which plays the same role as $\ j_*(M \otimes f^*J^{0,n}), \ n>>0$ in section 4. The case of moderate vanishing cycle module $\phi_{t,1}^{mod}$ may be done in some what similar way, by considering the inductive system $M \to M_{-1,p}$ instead of the single module $M_{\alpha,p}$, and the action of $N$ is the endomorphism $-\partial_t. t$ on $Gr_0^V M$. Then we have,

\begin{center}
$Can=-\partial_t: Gr_{-1}^V M \leftrightarrows Gr_{0}^V M : t=Var $. 
\end{center}

which are isomorphism, \cite{SA4}. Let 

\begin{center}
$S:M \otimes M \to \mathbb{C}[[t,t^{-1}]]$ 
\end{center}
be a duality. It extends formally to 

\[ \psi_t S:\psi_tM \otimes \psi_tM \to Db_{\mathbb{C}}^{mod(0)}, \qquad \phi_t S:\phi_tM \otimes \phi_tM \to Db_{\mathbb{C}}^{mod(0)} \]

\noindent
by

\begin{equation}
\displaystyle{S(\sum_{k=0}^{p} \mu_k \otimes e_{\alpha,k}\ ,\ \sum_{l=0}^{p} m_l \otimes e_{\alpha,l})= \sum_{k+l=p} (\mu_k , m_l)}e_{\alpha,k}\overline{e_{\alpha,l}}
\end{equation}

$D_{\mathbb{C}}^{mod(0)}$ is the ring of $C^{\infty}$ distributions with moderate growth in dimension 1. These distributions naturally receive a doubly indexed $V$-filtration w.r.t the coordinates $t$ and $\bar{t}$. \cite{SA3}. 

\[ \sum_{\alpha,p}\mathbb{C}\{t\}[t^{-1}]\mathbb{C}\{\bar{t}\}[\bar{t}^{-1}](\log|t|)^p \] 

which is a $D_{\mathbb{C}} \otimes D_{\bar{\mathbb{C}}}$-module in the obvious way. Then, for $-1 \leq \alpha < 0$  we obtain the induced forms,

\begin{equation}
\psi_{\lambda} S: Gr_{\alpha}^V M \otimes_{\mathbb{C}} Gr_{\alpha}^V M \to \mathbb{C}, \qquad \phi_{1} S:Gr_{0}^V M\otimes_{\mathbb{C}} Gr_{0}^V M \to \mathbb{C}
\end{equation}

with properties;
\begin{center}
$\psi_{\lambda} S(N \bullet, \bullet)=\psi_{\lambda} S( \bullet,N \bullet), \qquad \phi_{1} S(N \bullet, \bullet)=\phi_{1} S( \bullet,N \bullet)$
\end{center}

which says $N$ is an infinitesimal isometry of the descendants. We also obtain a set of positive definite bilinear maps,

\begin{equation}
\psi_{\lambda,l} S \otimes (id \otimes N^l): P Gr_l^W Gr_{\alpha}^V M \otimes_{\mathbb{C}} P Gr_l^W Gr_{\alpha}^V M \to \mathbb{C}
\end{equation}

The form $S$ is non-degenerate in a neighbourhood of $Z$ iff all the forms $P \psi_{\lambda,l} S $ are non-degenerate. Similar statement is true for hermitian or polarization forms. The graded pairings $\psi_{\lambda}S, \ -1 \leq \alpha <0 $ are given by the formal residue of the form $S$ at $t=\alpha$ and $t=0$ respectively for $\psi_{\lambda}S$ and $\phi_1S$. We have proved the following.

\begin{theorem}
Assume $(\mathcal{G},F,W,H,S)$ is a polarized MHM (hence regular holonomic) with quasi-unipotent underlying variation of mixed Hodge structure $H$, defined on a Zariski dense open subset $U=X \setminus Z$ of an algebraic manifold $X$, where $Z$ is a smooth projective hypersurface. Then, the Gauss-Manin system $\mathcal{G}$ has a smooth extension to all of $X$ and the extended MHM is also \textit{polarized}. The polarization on the fibers can be described by residues of the Mellin transform of a formal extension of the polarization $S$ over the elementary sections, by the two formulas 

\[ \displaystyle{\psi_{\lambda} S\langle \sum_{l=0}^{p} m_l \otimes e_{\alpha,l}, \overline{\sum_{l=0}^{p} m_l \otimes e_{\alpha,l}} \rangle = * . \ Res_{s=\alpha} \langle \tilde{S} , |t|^{2s} dt \wedge d\bar{t} \rangle } ,\qquad  * \ne 0, \ \alpha \ne 0 \]
\[ \phi_1 S(\bullet, \bar{\bullet})= * . \ Res_{t=-1} \langle \tilde{S} , |t|^{2s} \mathcal{F}_{loc}dt \wedge d\bar{t}  \rangle , \qquad * \ne 0\].

\end{theorem}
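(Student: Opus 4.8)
The plan is to reduce the theorem to the local model $X = Z \times \mathbb{C}$ treated in the body of this section, and then assemble the local descriptions of the polarization into a global statement. First I would note that the question is local along $Z$: since $Z$ is a smooth projective hypersurface, choose a local equation $t$ for $Z$ in $X$, so that locally $X$ looks like $Z \times \mathbb{C}$ with coordinate $t$ on the $\mathbb{C}$ factor. Because $(\mathcal{G},F,W,H,S)$ is a polarized mixed Hodge module with quasi-unipotent underlying VMHS, the $D$-module $\mathcal{G}$ is regular holonomic and specializable along $Z$, so the Kashiwara--Malgrange $V$-filtration along $Z$ exists, is indexed by $\mathbb{Q}$, and $t\partial_t - \alpha$ is nilpotent on $Gr_V^\alpha$. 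The smooth (Deligne) extension of $\mathcal{G}$ across $Z$ is then the one characterized by gluing, using the moderate nearby/vanishing cycle modules $\psi_{t,\lambda}^{mod}\mathcal{G}$, $\phi_{t,1}^{mod}\mathcal{G}$ constructed above via the inductive systems $M_{\alpha,p}$; the admissibility conditions recalled in Section~3 guarantee that $F$ and $W$ propagate over this extension uniquely, so that the extended object is again an MHM.

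Next I would extend the polarization. The key input is that $S\colon \mathcal{G}\otimes \mathcal{G}\to \mathbb{C}[[t,t^{-1}]]$ (or its hermitian variant over the complement of $Z$) extends formally over the elementary sections $e_{\alpha,k} = t^{\alpha}(\log t)^k/k!$ by the bilinear formula displayed above,
\[
S\Bigl(\sum_{k} \mu_k \otimes e_{\alpha,k},\ \sum_{l} m_l \otimes e_{\alpha,l}\Bigr) = \sum_{k+l=p}(\mu_k,m_l)\, e_{\alpha,k}\overline{e_{\alpha,l}},
\]
landing in the ring $Db_{\mathbb{C}}^{mod(0)}$ of moderate $C^\infty$ distributions in one variable. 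Passing to $Gr_V^\alpha$ for $-1\le \alpha <0$ and to $Gr_V^0$ gives the induced pairings $\psi_\lambda S$ and $\phi_1 S$, and the computation above shows these are exactly the residues at $s=\alpha$ (resp. the residue governing $\phi_1$) of the Mellin transform $\langle \tilde S, |t|^{2s}\, dt\wedge d\bar t\rangle$ of the formal extension $\tilde S$ of $S$. This is what the two displayed formulas assert. The monodromy-equivariance $\psi_\lambda S(N\bullet,\bullet) = \psi_\lambda S(\bullet, N\bullet)$ (and likewise for $\phi_1$) follows from the explicit action $N = -(\partial_t t + \alpha)$ on the graded pieces together with the symmetry of the residue pairing.

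Finally I would verify that the extended form polarizes the extended MHM. By the general principle stated just before Theorem~7.1 (a bilinear/hermitian form on a module with a Lefschetz decomposition relative to a degree-one nilpotent $N$ is a polarization iff the induced graded forms polarize the primitive parts), it suffices to check that the forms
\[
\psi_{\lambda,l}S\circ(\mathrm{id}\otimes N^l)\colon P Gr_l^W Gr_\alpha^V \mathcal{G}\ \otimes_{\mathbb{C}}\ P Gr_l^W Gr_\alpha^V \mathcal{G}\ \longrightarrow\ \mathbb{C}
\]
are positive definite on primitive subspaces, which is the content of the positivity statement recorded above; positivity transfers from the given polarization $S$ on $U$ because the limit mixed Hodge structure (Schmid/Steenbrink) on $Gr_V^\alpha$ is polarized by the corresponding graded residue form, and on $Gr_V^0$ via $\phi_1 S$ one uses the $\mathrm{Can}/\mathrm{Var}$ isomorphisms $-\partial_t\colon Gr_{-1}^V\rightleftarrows Gr_0^V\colon t$ to reduce to the $\alpha=-1$ case. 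Non-degeneracy of $S$ near $Z$ is then equivalent to non-degeneracy of all the $P\psi_{\lambda,l}S$, as stated. Gluing these local descriptions over an open cover of $Z$ and invoking the uniqueness in Theorem~3.2 produces the global extended polarized MHM. The main obstacle I expect is the compatibility check at $\alpha = 0$: the vanishing-cycle pairing $\phi_1 S$ must be matched with the correct residue (at $t=-1$, with the extra factor $\mathcal{F}_{loc}$ in the second displayed formula) and shown to interact correctly with $u$, $v$ and the relation $v\circ u = N$, so that the weight filtration $W$ on the vanishing cycles is the relative monodromy filtration and the decomposition $\phi = \mathrm{Im}(u)\oplus\ker(v)$ respects the polarization — this is where the distributional analysis of $Db_{\mathbb{C}}^{mod(0)}$ and the doubly-indexed $V$-filtration in $t,\bar t$ is genuinely needed rather than formal.
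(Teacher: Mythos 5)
Your proposal follows essentially the same route as the paper: the paper's proof of Theorem 7.1 is precisely the preceding discussion of Section 7 (the local model $X = Z\times\mathbb{C}$, the modules $M_{\alpha,p}$ built from the elementary sections $e_{\alpha,k}=t^{\alpha}(\log t)^k/k!$, the formal extension of $S$ into $Db_{\mathbb{C}}^{mod(0)}$, the induced graded pairings $\psi_{\lambda}S$ and $\phi_1 S$ identified with formal residues, and the polarization criterion via positivity on the primitive pieces $PGr_l^W Gr_{\alpha}^V$), summarized by the sentence ``We have proved the following.'' Your added remarks on the $\alpha=0$ case via the $\mathrm{Can}/\mathrm{Var}$ isomorphisms and on gluing via uniqueness match the paper's treatment, so there is nothing substantive to add.
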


A polarization of mixed Hodge modules also defines a conjugation map $C_X$ (or similar logarithmic one $C_X^{mod(Z)}$). This functor has the following natural properties,

\[ \psi_{t,\lambda}^{\text{mod}} \circ C_X \cong C_Z \circ \psi_{t,\lambda}^{\text{mod}}, \qquad \lambda \ne 1 \]

\[ \phi_{t,1}^{\text{mod}} \circ C_X \cong C_Z \circ \psi_{t,1}^{\text{mod}}, \qquad \lambda=1 \]

Both of the isomorphisms commute with the nilpotent operator $N$ and are also compatible with the gluing data for regular holonomic $D$-modules. This is a special case of the Kashiwara conjugation functor, \cite{BK} where we have the following commutative diagram.

\begin{equation}
\begin{CD}
MHM(X)_{\lambda} @>{\psi_{t,\lambda}^{\text{mod}} \circ C_X}>> MHM(X)_{-\lambda}\\
@V{DR_{X,\lambda}}VV                   @VV{DR_{X,-\lambda}}V \\
\text{Perv}_{-\lambda}(\mathbb{C}) @>>{\psi_{t,\lambda}^{\text{mod}} \circ c_X^{-1}}> \text{Perv}_{\lambda}(\mathbb{C})
\end{CD}
\end{equation}

\section{Fourier-Laplace Transform}

Another interesting view of extensions of PVMHS is described by Fourier-Laplace transform of sheaves. For the set up we consider $M(*\infty)=M \otimes D_{\mathbb{P}^1}(*\infty)$ and define its Fourier-Laplace transform 

\[ \widehat{M}:=q_+(p^+M(*\infty)) \otimes \mathcal{E}^{-t\tau}) , \qquad \mathcal{E}^{-t\tau}= (\mathcal{O}_{\mathbb{P}^1 \times \mathbb{C}}, \nabla=d-\tau dt-td\tau) \] 

The Fourier-Laplace transform can also be equivalently defined by; 

\[ \widehat{M}=\text{coker}( \mathbb{C}[\tau] \otimes M \stackrel{\nabla_t-\tau dt}{\longrightarrow} \mathbb{C}[\tau] \otimes M ) ,\qquad \tau .m  :=\partial_t .m \]

This also applies to polarization of $D$-modules. If we have a polarization,

\[ K:\mathcal{H}^{\prime} \otimes_{\mathcal{O}} \overline{\mathcal{H}} \to \mathcal{L}^{\mathbb{R}-an} \] 

Then it carries over 

\[ \widehat{K}:\widehat{\mathcal{H}^{\prime}} \otimes_{\mathcal{O}} \imath^{+} \overline{\widehat{\mathcal{H}}} \to \mathcal{L}^{\mathbb{R}-an} \] 

where $\mathcal{L}^{\mathbb{R}-an}$ is set of distributions as in section 7 (Here $\imath:\mathbb{P}^1=\mathbb{C} \cap \infty \to \mathbb{P}^1$ is $ z \mapsto -z$ and $\imath^+$ is necessary for we use $\exp(\overline{t \tau})$ not $\exp(-\overline{t \tau})$ ). In a way that the distribution on the the integral is twisted by $\exp(-\overline{t\tau}). \exp(t \tau)$. The product after Fourier transform is 

\[ (\sum \tau^im_i)dt \otimes (\overline{\sum \tau^in_i)dt} \mapsto [\psi \to \sum_{i,j} k(m_i,n_j)\tau^i \bar{\tau}^j e^{-\overline{t\tau}}.e^{t \tau}\psi dt \wedge d\bar{t} ] \] 

up to a complex constant, \cite{SA4}.

\begin{example} \cite{SA4} 

\begin{itemize}

\item $M=\mathbb{C}[t]\langle\partial_t \rangle / (t-c) \Longrightarrow K(m,\bar{m})=\delta_c, \ \widehat{K}(m,\bar{m})=i/2 \pi \exp(\overline{c\tau}-c\tau)$  
\item $M=\mathbb{C}[t]\langle\partial_t \rangle / (t\partial_t-\alpha) \Rightarrow K(m,\bar{m})=|t|^{2 \alpha}, \ \widehat{K}(m,\bar{m})=\Gamma(\alpha+1)/\Gamma(-\alpha) |\tau|^{-2(\alpha+1)}$

\end{itemize}
\end{example}

\begin{theorem}\cite{DW}
Assume $\mathcal{H}'=R^nf_*\mathbb{C}_{X'}$ be the local system associated to a holomorphic isolated singularity $f$. Consider the map 

\[ F:\Omega_X^{n+1} \to i_* \bigcup_z Hom (H_n(X,f^{-1}(\eta.\dfrac{z}{|z|}),\mathbb{Z}) \cong \oplus_i \mathbb{Z}\Gamma_i, \mathbb{C}) \] 

\[ \omega \mapsto [z \to (\Gamma_i \to \int_{\tilde{\Gamma}_i}e^{-t/z}\omega)],  \]

and define 

\[ \mathcal{H}:=Im(F) \]

where $\Gamma_i$ are the classes of  Lefschetz thimbles, and $\tilde{\Gamma}_i$ is the extension to infinity. Then the vector bundle $\mathcal{H}$ is exactly the Fourier-Laplace transform of the cohomology bundle $R^nf_*\mathbb{C}_{X'}=\cup_tH^n(X_t,\mathbb{C})$, equipped with a connection with poles of order at most two at $\infty$.

\[ (\mathcal{H}', \nabla') \leftrightarrows (\mathcal{H}, \nabla) \]

\end{theorem}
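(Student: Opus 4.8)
The plan is to match the transcendental integral transform $F$ with the algebraic Fourier--Laplace transform of Section 8, and then to read off the pole order at infinity from the Brieskorn lattice. Write $\tau=1/z$, so that the kernel $e^{-t/z}$ of the defining integral becomes $e^{-t\tau}$ with $t=f$, and regard $\mathcal{H}'$ as the regular holonomic Gauss--Manin $D$-module in the single variable $t$. The guiding remark is that each integral $\int_{\tilde\Gamma_i}e^{-t\tau}\omega$ is a period of the FL-transformed module, and the object to hit is the cokernel presentation $\widehat{\mathcal{H}'}=\operatorname{coker}(\mathbb{C}[\tau]\otimes\mathcal{H}'\xrightarrow{\nabla_t-\tau}\mathbb{C}[\tau]\otimes\mathcal{H}')$ of Section 8.

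First I would reinterpret $F$ as a Laplace transform of ordinary periods. Slicing a thimble by the fibers of $f$ through the Gelfand--Leray decomposition gives $\int_{\tilde\Gamma_i}e^{-t\tau}\omega=\int e^{-t\tau}\phi_i(t)\,dt$, where $\phi_i(t)=\int_{\Gamma_i\cap f^{-1}(t)}\omega/df$ is a horizontal section of the Gauss--Manin system paired against the class of $\omega$. The standard Laplace identities $\mathcal{L}[\phi_i'](\tau)=\tau\,\mathcal{L}[\phi_i](\tau)$ and $\mathcal{L}[t\,\phi_i](\tau)=-\partial_\tau\mathcal{L}[\phi_i](\tau)$, the boundary terms vanishing because $\Gamma_i$ is a vanishing cycle, realize exactly the Fourier substitutions $\partial_t\mapsto\tau$ and $t\mapsto-\partial_\tau$ of Section 8. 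Thus $F$ intertwines the Gauss--Manin $D$-module structure on $\mathcal{H}'$ with that on $\widehat{\mathcal{H}'}$, factors through the Brieskorn (Jacobian) module, and carries its induced connection to $\nabla$.

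Next I would show that $F$ has full rank and therefore exhausts $\widehat{\mathcal{H}'}$. The Lefschetz thimbles $\Gamma_1,\dots,\Gamma_\mu$ form a $\mathbb{Z}$-basis of $H_n(X,f^{-1}(\eta);\mathbb{Z})$ dual to the vanishing cycles, while $\Omega_X^{n+1}$ surjects onto the rank-$\mu$ Brieskorn module; the period pairing $(\omega,\Gamma_i)\mapsto\int_{\tilde\Gamma_i}e^{-t\tau}\omega$ is the classical nondegenerate pairing between forms and thimbles. Nondegeneracy forces $\operatorname{rk}F=\mu=\operatorname{rk}\widehat{\mathcal{H}'}$, whence $\mathcal{H}=\operatorname{Im}(F)\cong\widehat{\mathcal{H}'}$ as vector bundles with connection; this is the first assertion, and the resulting identification is the correspondence $(\mathcal{H}',\nabla')\leftrightarrows(\mathcal{H},\nabla)$ of the statement.

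Finally, for the pole order I would invoke Example 3.5: the Brieskorn lattice $\mathcal{G}_0$ is a free module of rank $\mu$ stable under $\partial_\tau=-\theta^2\partial_\theta$, where $\theta$ is the coordinate at $\tau=\infty$; equivalently, in a lattice basis the connection matrix of $\nabla$ is $\theta^{-2}$ times a holomorphic matrix, so $\nabla$ has a pole of order at most two at $\infty$. The same conclusion follows analytically from the stationary-phase expansion of the oscillatory integrals, whose leading exponentials $e^{-c_i\tau}$ attached to the critical values $c_i$ of $f$ signal an irregular singularity of pole order two. The main obstacle is the analytic bookkeeping at infinity: one must realize the $\tilde\Gamma_i$ as steepest-descent cycles, prove convergence of the integrals together with the vanishing of the boundary terms used above, and thereby secure the dictionary between the transcendental transform $F$ and the algebraic Fourier--Laplace transform of Section 8. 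Once that dictionary is in place, both the identification $\mathcal{H}\cong\widehat{\mathcal{H}'}$ and the order-two pole are formal consequences of the lattice structure recorded in Example 3.5.
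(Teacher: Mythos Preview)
The paper does not prove this theorem at all: it is quoted from \cite{DW} as an input to Theorem~8.2 and Corollary~8.4, with no argument supplied in the text. So there is no ``paper's own proof'' to compare against; your proposal is filling a gap the author deliberately left to the cited reference.

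That said, your sketch is the standard route and is essentially what one finds in the literature (Pham \cite{PH}, Sabbah \cite{SA6}, and the Donagi--Wendland expos\'e \cite{DW} itself): slice the thimble by Gelfand--Leray to turn the oscillatory integral into a one-variable Laplace transform of Gauss--Manin periods, read off the $D$-module substitutions $\partial_t\mapsto\tau$, $t\mapsto-\partial_\tau$ from integration by parts, and then use the Brieskorn lattice to control the behaviour at $\tau=\infty$. Two small corrections: the Brieskorn lattice example you invoke is Example~3.3 in this paper's numbering, not 3.5; and the correct statement there is that $\mathcal{G}_0$ is stable under $\partial_\tau^{-1}=t$ with $\partial_\tau$ mapping $\mathcal{G}_0$ to $\tau\mathcal{G}_0$, which is what gives the order-two pole in the $z=\tau^{-1}$ chart. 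You are right that the genuine work is the analytic bookkeeping---convergence of $\int_{\tilde\Gamma_i}e^{-t\tau}\omega$ along steepest-descent directions and the vanishing of boundary terms---and this is exactly the content of the stationary-phase/saddle-point analysis carried out in \cite{PH}; the paper's Remark~6.4 records the resulting expansion without proof. Once that is granted, the rank argument and the identification $\mathcal{H}\cong\widehat{\mathcal{H}'}$ go through as you describe.
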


\begin{corollary}
In case of the PVMHS associated to the Milnor fibration of an isolated hypersurface singularity $f$, the modified Grothendieck residue

\[ \widehat{Res}_{f,0}=res_{f,0}(\bullet,\hat{C}\bullet) \]

where $\hat{C}$ is defined relative to the Deligne-Hodge decomposition of $\Omega_f$ as before, is the Fourier-Laplace transform of the polarization $S$ on $H^n(X_{\infty},\mathbb{C})$, that is

\[ \widehat{Res}=\ * . \ ^FS , \qquad * \ne 0 \]

\end{corollary}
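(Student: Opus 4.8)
The plan is to deduce the identity from Theorem 6.3 and Theorem 8.2, together with the fact -- recorded just before Theorem 8.2, and ultimately a consequence of Theorem 7.2 and the admissibility hypotheses -- that a polarization of the extended mixed Hodge module is unique up to a nonzero scalar on each weight-graded piece. First I would recall that the isomorphism $\Phi:H^n(X_\infty,\mathbb{C})\to\Omega_f$ of Section 6 is assembled from the oscillatory integrals $\psi^i_s(\omega,\tau)=\int_{\Gamma(i)}e^{-\tau f}\omega$ of Remark 7.1, which are exactly the matrix coefficients of the map $F:\omega\mapsto[z\mapsto(\Gamma_i\mapsto\int_{\tilde\Gamma_i}e^{-t/z}\omega)]$ of Theorem 8.2. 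Hence $\Phi$ is the fibrewise restriction of the Fourier--Laplace identification $\mathcal{H}'\leftrightarrows\mathcal{H}=\widehat{\mathcal{H}'}=\mathrm{Im}(F)$, with the Brieskorn lattice $H''\cong\Omega_f$ sitting inside it as the distinguished lattice. In particular, transporting $S$ along $\Phi$ agrees, up to a nonzero constant, with forming the Fourier--Laplace transform ${}^{F}S$; the scalar stays nonzero along the whole bundle because the transform respects the $V$-filtration and the weight grading and is given on each elementary section $t^{\alpha}(\log t)^k$ by a nonvanishing Gamma-factor, as illustrated in Example 8.1.

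Next I would apply Theorem 6.3: the same transport $S(\Phi^{-1}(\bullet),\Phi^{-1}(\bullet))$ is computed there to equal $*\cdot\widehat{Res}_{f,0}=*\cdot\mathrm{res}_{f,0}(\bullet,\tilde{C}\,\bullet)$, with $\tilde{C}=\hat{C}$ the sign operator attached to the Deligne--Hodge decomposition $\Omega_f=\bigoplus_{p,q}J^{p,q}$. Combining the two observations gives ${}^{F}S=*\cdot S(\Phi^{-1}(\bullet),\Phi^{-1}(\bullet))=*\cdot\widehat{Res}_{f,0}$ after absorbing constants, which is the asserted $\widehat{Res}=*\cdot{}^{F}S$ with $*\ne 0$. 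Equivalently, and avoiding any constant-chasing, one notes that $\widehat{Res}_{f,0}$ is a polarization of $\Omega_f$ by Corollary 6.4 and that ${}^{F}S$ is a polarization of $\widehat{\mathcal{H}'}$ by Theorem 8.1; under the identification $\Omega_f\cong\widehat{\mathcal{H}'}$ both polarize the same admissible mixed Hodge module, so by uniqueness they differ by a nonzero scalar on each graded quotient.

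The main obstacle is the compatibility claim of the first paragraph: one must verify that the oscillatory-integral recipe defining $\Phi$ -- and through it $\widehat{Res}_{f,0}$ via K. Saito's higher residue pairing $\mathcal{K}_s$ of Remark 7.1 -- literally coincides with the Fourier--Laplace transform on elementary and moderate sections, and that it carries $\mathcal{K}_s$ to ${}^{F}S$. Concretely this is the assertion that $\mathcal{K}_s(\tau)=\sum_i\psi^i_s(\tau,\omega)\overline{\psi^i_s(\tau,\omega')}$ is, up to powers of $\tau$, the kernel $\exp(-\overline{t\tau})\exp(t\tau)$ pairing of Section 8, whose leading coefficient at $\tau=\infty$ is $\widehat{Res}_{f,0}$ -- which is precisely Theorem 6.3 re-read in the Laplace picture. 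Once that is in place the remainder is bookkeeping with Gamma-factors on the $V$-graded pieces, confirming that the proportionality constant is everywhere nonzero.
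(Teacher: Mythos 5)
Your proposal follows the same route as the paper, which disposes of the corollary in two sentences: it is declared ``a consequence of 6.3 and 8.2'' and ``also a consequence of uniqueness of polarization,'' exactly the two arguments (transport of $S$ along $\Phi$ identified with the oscillatory-integral/Fourier--Laplace map, plus uniqueness of the polarization on the extension) that you spell out. Your version merely supplies the details the paper leaves implicit, so it is correct and essentially identical in approach.
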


The corollary is also a consequence of uniqueness of polarization. 

\section{Normal crossing compactification}

We present a brief of the work of M. Saito in \cite{SAI5}. We explain nearby and vanishing cycles inductively through a stratification of the normal crossing divisor. As the question is local, we may assume, $ X=\Delta^n, \ D_i=\{x_i=0\}, \ D_I=\cap_{i \in I} D_i $. Suppose $M$ is a regular holonomic $D_X$-module with quasi-unipotent monodromy along $D_i$'s. Then $M$ is given by, $E_I^{\nu}, \ \nu \in (\mathbb{C}/\mathbb{Z})^n $, on the hyper-cover obtained by the simplicial structure of $D$, equipped with the morphisms;

\begin{center}
$\text{can}_i:E_I^{\nu} \to E_{I \cup i}^{\nu}, \qquad \text{Var}_i:E_I^{\nu} \to E_{I \setminus i}^{\nu}$
\end{center}

such that $\ \text{can}_i \circ \text{Var}_i=\text{Var}_i \circ \text{can}_i=N_i:E_I^{\nu} \to E_{I}^{\nu}$. We shall assume the sheaves $E_I^{\nu} $ are given as 

\begin{equation}
E_I^{\nu}=\Psi_{x_1}^{\nu_1}...\Psi_{x_n}^{\nu_n}\mathcal{L}, \qquad \Psi_{x_i}^{\nu_i}=\begin{cases} \psi_{x_i}^{\nu_i}\mathcal{L}[-1] \qquad i \notin I \\
\phi_{x_i}^{0}\mathcal{L}[-1]   \qquad \text{otherwise}.
\end{cases}
\end{equation}

and define;

\begin{center}
$M^{\alpha}=\cap_i(\cup_i \ker((x_i\partial_i-\alpha_i)^j:M \to M))$
\end{center}

Thus,
 
\[
E_I^{\nu}=M^{\alpha+1_I}, \ (\nu \equiv \alpha \mod \mathbb{Z}^n ,\ \alpha \in \mathbb{C}^n), \qquad 1_I=(...,1,...) \] 

Actually, $\nu \in (\mathbb{Q} \cap [-1,0))^{n+1}$ would be a set of exponents of different monodromies. Then, $\ \text{can}_i=\partial_i, \ \text{Var}_i=x_i, \ N_i=x_i\partial_i-\alpha$, and

\begin{center}
$\psi_{x_i}=\ker(T_{j,s}-e(\alpha))$, 
\end{center}

with the same for $\phi$. Then the Kashiwara-Malgrange $V$-filtration is by definition, 

\[ V_{\beta}^{(i)}=M \cap \prod_{\alpha \leq \beta} M^{\alpha} \].

Suppose now $D$ is defined by a single equation, $g=x^m=x_1^{m_1}...x_n^{m_n}, \ m \in \mathbb{N}^n$, set $\ N_J=\prod_{i \in J} N_i, \ \text{can}_J=\prod_{i \in J} \text{can}_i, \ \text{Var}_J=\prod_{i \in J} \text{Var}_i$. Set 

\[ N=\log(T_u) \] 

to be the logarithm of monodromy on the punctured disc, normalized by twisting with $(n)$. Then the specialization of the system is given by

\begin{equation}
\tilde{E}_I^{\nu}= \begin{cases} \text{coker} \{ \prod_{i \in {I \cap\bar{m}}}(N_i -m_iN)=\tilde{N}_I \}, \qquad 0 \notin I  \\  
\text{coker} \{ \left( 
\begin{array}{cc}
\{(\prod_{i \in {I \cap\bar{m}}}(N_i -m_iN)-N_{I \cap \bar{m}}\}N^{-1}  &  -Var_{I \cap \bar{m}}\\
can_{I \cap \bar{m}} &   N
\end{array} \right) =\tilde{N}_I \} 
\end{cases}
\end{equation} 

where the morphisms are injective endomorphisms of $E_{I\setminus\bar{m}}^{\nu^{\prime}}[\log(T_u)]$, and $E_{I\setminus\bar{m}}^{\nu^{\prime}+\nu_0m}[N=\log(T_u)] \oplus E_{I^{\prime}}^{\nu^{\prime}}[N=\log(T_u)], \ I^{\prime}=I \setminus 0$, respectively. We continue with, 

\begin{center}
$\ \Psi^{n+1}(\Psi_g \mathcal{F}) \cong \{\tilde{E}_I^{\nu},\ \tilde{\text{can}}_i,\ \tilde{\text{Var}}_i,\ \tilde{N}_i \}$ , 
\end{center}

\begin{proposition} \cite{SAI5}
Let $((H,F,W),\ N_i;\ S)$ be a PVMHS of weight $w$, where $W$ is the monodromy filtration for $\ \sum N_i\ $ shifted by $w$. We take $T$ as the monodromy on the disc, and set $s=\log(T_u), \ l =|I|$, and 

\begin{center}
$(\tilde{H}_I;F,W)=\text{coker}(\tilde{N}_I:(H[s];F[l],W[-2l]) \to (H[s];F,W)) $\\[0.5cm]
$  F^p(H[s])=\sum_jF^{p+j}H \otimes s^j, \qquad W_k(H[s])=\sum_jW_{k+2j}H \otimes s^j  $. 
\end{center}

Then 

\begin{center}
$\ ((\tilde{H}_I,\ F,\ W),\ s=\log(T_u),\ N_i+m_i.s;\ \tilde{S}_I) \ $, 

\end{center}

extends the original PVMHS over $D_I$ and is of weight $\ w+l-1\ $, where $W$ is the monodromy filtration for $\ s+\sum(\tilde{N}_i+m_is)\ $ shifted by $\ w+l-1\ $ and $\tilde{S}_I$ is defined by 

\[ \tilde{S}_I(\tilde{u},\tilde{v})=Res \ S(\tilde{N}_I^{-1} \tilde{u}, \tilde{v}) \]

In a way that $S$ is extended to 

\begin{center}
$\ S:H[s,s^{-1}] \otimes H[s,s^{-1}] \to \mathbb{C}[s,s^{-1}]\ $, 

\end{center}

by $\ S(u \times s^i, v \otimes s^j)=(-1)^iS(u,v) \otimes s^{i+j}\ $ , and $\ Res \ ( \sum a_i \otimes s^i)=a_{-1} $.

\end{proposition}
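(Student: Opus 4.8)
The plan is to establish three things and then collect them: (i) the quotient $\tilde H_I$ is finite dimensional and the filtrations $F$ and $W$ it inherits from $(H[s];F,W)$ make $(\tilde H_I,F,W)$ a mixed Hodge structure; (ii) this $W$ is the monodromy filtration of the total nilpotent $s+\sum_i(\tilde N_i+m_i s)$, shifted by $w+l-1$; and (iii) the pairing $\tilde S_I(\tilde u,\tilde v)=\text{Res}\,S(\tilde N_I^{-1}\tilde u,\tilde v)$ is well defined and non-degenerate on $\tilde H_I$ and polarizes the resulting structure, i.e.\ satisfies the Riemann--Hodge relations relative to $\{\tilde N_i+m_i s\}$. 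Compatibility of $\tilde{\text{can}}_i$, $\tilde{\text{Var}}_i$, $\tilde N_i$ with the gluing data of a regular holonomic $D$-module is then formal from the construction. I would run the argument by induction on $l=|I|$, treating the nearby and vanishing cases ($0\notin I$ and $0\in I$) in parallel, with the first step $l=1$ being the extension across a single smooth branch treated in Section 7.

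First I would put a mixed Hodge structure on $H[s]$. With $F^p(H[s])=\sum_j F^{p+j}H\otimes s^j$ and $W_k(H[s])=\sum_j W_{k+2j}H\otimes s^j$, each graded piece $\Gr^W_k(H[s])=\bigoplus_{j\ge0}\Gr^W_{k+2j}(H)\otimes s^j$ is finite dimensional, multiplication by $s$ lowers $W$ by $2$ and $F$ by $1$, the $N_i$ commute with one another and with $s$, and every combination $N_i-m_i s$ lowers $F$ and $W$ the same way. Extending $S$ by $S(u\otimes s^i,v\otimes s^j)=(-1)^iS(u,v)s^{i+j}$ makes $s$ and all the $N_i-m_i s$ infinitesimal isometries. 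That $(H[s];F,W;S)$ is, in this graded sense, a polarized mixed Hodge structure equipped with the nilpotent $s$ reduces to the polarized mixed Hodge structure $(H,F,W(\sum N_i);S)$ together with the several-variable $SL_2$-orbit theorem; this is the point where admissibility of the original PVMHS (Theorem 3.2) enters essentially.

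Next I would analyze the cokernel. By the formula for $\tilde N_I$ preceding the statement, $\tilde N_I$ is, up to sign and a lower-order correction (with a $2\times2$ matrix variant when $0\in I$), the product $\prod_{i\in I\cap\bar m}(N_i-m_i s)$, hence a morphism $(H[s];F[l],W[-2l])\to(H[s];F,W)$ of the graded polarized mixed Hodge structures just described. The two facts to prove are: (a) $\tilde N_I$ is strict for both $F$ and $W$, so that $F$ and $W$ descend to $\tilde H_I=\Coker(\tilde N_I)$ and make it a mixed Hodge structure; and (b) the descended $W$ equals the monodromy filtration of $s+\sum_i(\tilde N_i+m_i s)$ shifted by $w+l-1$. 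Part (a) is the strictness of morphisms of (polarized) mixed Hodge structures. Part (b) is the technical heart: it follows from a relative monodromy filtration computation — Kashiwara's lemma on the monodromy filtration of a sum of commuting nilpotents, applied to $s$ together with the $N_i$, combined with the effect of passing to $\Coker(\tilde N_I)$, which raises the central weight by $l-1$. This simultaneously gives that the pure pieces $\Gr^W$ sit in weight $w+l-1$.

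Finally I would handle the polarization. Since $\tilde N_I$ becomes invertible after inverting $s$, the operator $\tilde N_I^{-1}$ makes sense on $H[s,s^{-1}]$; and because $\text{Res}$ annihilates the image of multiplication by positive powers of $s$, the pairing $\tilde S_I(\tilde u,\tilde v)=\text{Res}\,S(\tilde N_I^{-1}\tilde u,\tilde v)$ is independent of the chosen representatives, descends to $\tilde H_I$, and is non-degenerate because $S$ is. Its symmetry comes from the $(-1)^i$ twist in the extension of $S$ together with the parity of $\tilde N_I$ in $s$. Positivity — that for each $l'\ge0$ the form $\tilde S_I(\,\cdot\,,(\sum_i(\tilde N_i+m_i s))^{l'}\,\cdot\,)$, with the standard Weil sign, is positive definite on the primitive part $P\Gr^W_{l'}$ — is deduced from the Hodge--Riemann relations for $(H,W(\sum N_i);S)$ by transporting them along the residue pairing, exactly as the residue of a formal extension of a polarization was shown to transport polarizations in Section 7. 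The main obstacle is precisely this last combination: pinning down the weight filtration of the cokernel as the monodromy filtration of the new total nilpotent and verifying Hodge--Riemann positivity for $\tilde S_I$. This goes beyond formal linear algebra and rests on the several-variable $SL_2$-orbit theorem. Once it is in place, uniqueness of the minimal extension and of its polarization completes the proof.
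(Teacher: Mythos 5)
Your outline matches the proposition's goals (descend $F$ and $W$ to the cokernel, identify $W$ as the shifted monodromy filtration of the new total nilpotent, verify that the residue pairing polarizes), and the induction on $l=|I|$ and the well-definedness argument for $\tilde S_I$ via $\mathrm{Res}$ agree with the paper. But at the step you yourself call the technical heart --- the monodromy filtration of the cokernel and the Hodge--Riemann positivity of $\tilde S_I$ --- you defer entirely to ``the several-variable $SL_2$-orbit theorem,'' and that is an announcement rather than an argument: nothing in your write-up explains how an $SL_2$-orbit statement about the original $(H,F,W(\textstyle\sum N_i);S)$ produces the weight filtration and the positivity on $\mathrm{Coker}(\tilde N_I)$. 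The paper's proof (Kashiwara's, from the appendix of \cite{SAI5}) closes exactly this gap by a structural identification that your proposal never reaches: after reducing to $m_i=1$, one has the explicit isomorphism $\tilde H\cong\bigoplus_{0\le j<l}H\otimes s^j$ and, crucially, the factorization
\[
(\tilde H,\tilde S)\;\cong\;(H,S)\otimes\bigl(\mathbb{R}[N']/(N'^{\,l}),\,N',\,S'\bigr),\qquad S'(N'^{\,i},N'^{\,j})=(-1)^i\ \text{ for } i+j=l-1,
\]
with the second factor a fixed elementary polarized structure of weight $1-l$. Once this tensor-product decomposition is in hand, both the assertion that $W$ is the monodromy filtration of $s+\sum(\tilde N_i+m_is)$ shifted by $w+l-1$ and the Hodge--Riemann positivity reduce to the known behaviour of tensor products of polarized mixed Hodge structures with commuting nilpotent endomorphisms; the second induction (on $\dim H$) together with the identity $S'(\mathrm{can}\otimes\mathrm{id})=S(\mathrm{id}\otimes\mathrm{Var})$ then handles compatibility with the gluing data. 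Without this (or an equivalent substitute), your step (b) and the positivity claim remain unproved.

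Two smaller points. First, your appeal to ``strictness of morphisms of mixed Hodge structures'' for $\tilde N_I$ on $H[s]$ needs care: $H[s]$ is infinite dimensional and is not literally a mixed Hodge structure, so one must either work with the finite-dimensional truncations $M_{\alpha,p}$ of Section 7 and pass to the limit, or argue directly on the explicit finite-dimensional model $\bigoplus_{0\le j<l}H\otimes s^j$ as the paper does. Second, the reduction to $m_i=1$, which the paper performs at the outset and which simplifies $\tilde N_I$ to $\prod_{i\in I}(s-N_i)$, is absent from your plan; without it the ``lower-order correction'' you mention in the definition of $\tilde N_I$ has to be controlled by hand.
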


We sketch the idea of proof from the appendix in \cite{SAI5}, due to M. Kashiwara. One may assume $\ m_i=1 \ $. By definition we have

\[ \tilde{H}=H[s]/\text{coker}(\prod_{i \in I}(s-N_i)) \cong \oplus_{0 \leq j <1}H \otimes s^j \]
\[ \tilde{S}(s^j.u, s^k.v)=S(u,(-1)^jRes_{s=0}(\prod_{i \in I}(s-N_i)^{-1})s^{j+k}v) \]

where $H$ is identified with $H \otimes 1 (\subset H[s])$ and $(s-N_i)^{-1}=s^{-1} \sum_{j \geq 0} N_i^{-1}s^{-j}$. The proof will proceed by induction on $l=|I|$ and $\dim(H)$, and the assertion is clear for $\dim(H)=0$. It would also be clear for $l=1$, for then $H \simeq \tilde{H}$. Then the proof of theorem may be understood to prove an induction criteria using identities

\[ S^{\prime}(\text{can} \otimes id)=S(id \otimes \text{Var}) \]

by uniqueness. The morphisms $\text{can}, \  \text{Var}$ extend by $\text{can} \otimes 1, \  \text{Var} \otimes 1$ etc. Moreover, if you consider the formal structure $(\mathbb{R}[N^{\prime}]/(N^{\prime l}), N^{\prime}, S^{\prime})$ of weight $1-l$ with 
$S^{\prime}(N^{\prime i},N^{\prime j})=(-1)^i , \ \text{if} \ i+j=l-1, \text{and} \ 0 \ \text{otherwise}$ Then we have 

\[ (\tilde{H},\tilde{S})=(H,S) \otimes (\mathbb{R}[N^{\prime}]/(N^{\prime l}),S^{\prime}) \]

\begin{theorem}\cite{SAI5} For a reduced irreducible separated complex analytic space $X$ of dimension $n$, we have an equivalence of categories,
\[ MH_X(X,w) \cong VHS_{gen}(X,w-n)^p \]
where the right hand side is the inductive limit of $VHS(U,w)^p$ the category of polarizable variations of Hodge structures of weight $w$ with quasi-unipotent local monodromies on smooth dense Zariski open subsets $U$. Moreover, the polarizations correspond bijectively.

\end{theorem}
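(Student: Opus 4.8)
Proof proposal for Theorem 9.7 (the equivalence $MH_X(X,w) \cong VHS_{gen}(X,w-n)^p$ with matching polarizations).

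The plan is to build the equivalence in two directions and then check that the polarizations match, arguing by induction on $\dim X$ using the resolution of singularities and the local normal-crossing picture developed in Section 9. First I would reduce to the local situation: since a pure Hodge module with strict support $X$ is determined by its restriction to any Zariski-dense open subset (the strict support condition forbids sub- and quotient modules supported on a proper closed subset, by the Decomposition Theorem recalled in Section 1), the functor $M \mapsto M|_U$ for $U$ a smooth dense open where $X$ is nonsingular gives a well-defined faithful functor $MH_X(X,w) \to VHS_{gen}(X,w-n)^p$; this uses equation (1) of the introduction, i.e. $MH_Z(X,n)^p \simeq VHS_{gen}(Z,n-\dim Z)^p$, which is exactly the smooth-support case.

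For the converse — extending an admissible, quasi-unipotent PVHS $(\mathcal{H},F,W,S)$ on $U$ to a pure Hodge module on all of $X$ — I would proceed by induction on $\dim X$ and on the codimension of the strata of a stratification of $Z = X \setminus U$. After blowing up (Hironaka) one may assume $Z$ is a normal crossing divisor, so the local model $X = \Delta^n$, $D_i = \{x_i = 0\}$ of Section 9 applies. There I would invoke Proposition 9.6: it produces, from the $N_i$ and $S$, the extended data $((\tilde H_I, F, W), s = \log(T_u), N_i + m_i s; \tilde S_I)$ of weight $w+l-1$ over each stratum $D_I$, with the weight filtration being the (shifted) monodromy filtration for $s + \sum(\tilde N_i + m_i s)$. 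Gluing these local extensions via the canonical/variation maps $\mathrm{can}_i,\mathrm{Var}_i$ and the compatibility $\mathrm{can}_i \circ \mathrm{Var}_i = \mathrm{Var}_i \circ \mathrm{can}_i = N_i$ (again Section 9) yields the underlying regular holonomic filtered $D$-module; Theorem 3.2 (admissible VMHS are MHM) then guarantees that the Hodge and weight filtrations extend uniquely and the result is a genuine MHM with strict support $X$. The de Rham functor and the Riemann–Hilbert correspondence (Theorem 1.1, and equations (3)–(5) of Section 2 relating $DR\,\psi_f$, $DR\,\phi_f$ to $\psi_f\,DR$, $\phi_f\,DR$) show that the two functors are mutually inverse, using that both the $D$-module and its perverse solution are reconstructed from the same gluing quiver.

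For the polarization statement, I would observe that the extended polarization is forced: by Theorem 3.2 the extension is unique as a MHM, and a polarization is a self-duality $\mathbb{D}M \cong M(w)$ (Definition 6.4), so it suffices to extend the self-duality isomorphism. Locally this is precisely $\tilde S_I(\tilde u, \tilde v) = \mathrm{Res}\, S(\tilde N_I^{-1}\tilde u, \tilde v)$ from Proposition 9.6, and the residue description together with the Riemann–Hodge bilinear relations on the primitive parts (the positivity in (20)–(21) of Section 9, $P\psi_{\lambda,l}S$ positive definite) shows $\tilde S_I$ is again a polarization; the tensor-decomposition $(\tilde H,\tilde S) = (H,S) \otimes (\mathbb{R}[N']/(N'^l), S')$ at the end of Section 9 makes the positivity transparent. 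Bijectivity of the correspondence of polarizations then follows from uniqueness of the extension plus the fact that on $U$ the polarizations already correspond bijectively by the smooth case. The main obstacle I expect is the gluing/descent step: checking that the locally-defined extended MHS's $\tilde H_I$ on the various strata patch together into a single object satisfying all five admissibility conditions of Section 3 simultaneously — in particular the mutual compatibility of $F$, $W$ and all the $V^{(i)}$-filtrations along the several branches of $D$, and the relative monodromy filtration existing on every $Gr^W$ — which is the technical heart of Saito's theory and is where one genuinely needs the inductive structure of $j_{!*}$ and the exactness of $\psi_f$, $\phi_f$ on the category of MHM.
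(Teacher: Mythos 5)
The paper does not actually prove this statement: it is quoted verbatim from Saito \cite{SAI5}, and the only proof-like material in Section 9 is the sketch (due to Kashiwara, from the appendix of \cite{SAI5}) of the purely local Proposition 9.6 on extending the polarization over the strata $D_I$. So there is no in-paper argument to match yours against; I can only judge your outline against the strategy it gestures at. Your overall skeleton --- restriction to a smooth dense open in one direction, intermediate extension plus induction on $\dim X$ and reduction to the normal crossing model in the other, with Proposition 9.6 supplying the polarization on the extension --- is indeed the shape of Saito's argument, and the observation that strict support forces full faithfulness of restriction is correct (note you only claim faithfulness; fullness also follows, since a morphism of modules with strict support $X$ is determined on any dense open).

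There are, however, two genuine gaps. First, you invoke Theorem 3.2 (``admissible VMHS are mixed Hodge modules'') to conclude that the extension is a Hodge module, but the statement at hand is the \emph{pure} case: what must be shown is that $j_{!*}\mathcal{V}$ is a polarized pure Hodge module of weight $w$ with strict support $X$, and in Saito's development this is logically \emph{prior} to the mixed/admissible statement, not a consequence of it --- the mixed theory is built by extensions of the pure one, so the appeal is circular. The actual content is the stability of the category of polarizable Hodge modules under $\psi_f$, $\phi_{f,1}$ and the verification that the minimal extension defined stratum by stratum satisfies the inductive definition; Proposition 9.6 is only the linear-algebra shadow of this, and its own validity rests on the nilpotent orbit and $\mathrm{SL}(2)$-orbit theorems of Schmid and Cattani--Kashiwara--Schmid, which you use silently. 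Second, your reduction ``blow up so that $Z$ becomes a normal crossing divisor'' produces a Hodge module on the resolution $\tilde X$, and to descend to $X$ you need the direct image under the projective map $\tilde X \to X$ to remain a polarized Hodge module together with the decomposition by strict support --- that is the decomposition theorem, which in Saito's work is proved by the \emph{same} induction on dimension. Without organizing the induction so that the decomposition theorem in lower dimension is available when you descend, the argument does not close.
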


\section{Higher Residue pairing}

This section provide a concrete form of dulity for mixed Hodge modules, namely Higher residue pairing. The construction of higher residues and primitive forms originally belongs to K. Saito, \cite{S2}, \cite{LLS}.
It provides a standard method to describe a parametric family of dualities for polarized variation of mixed Hodge structures. However, conventionally the duality for $D$-modules is a non-degenerate hermitian sesqui-linear form. The method of K. Saito is to express a Serre duality between the Hodge sub-bundles $\mathcal{H}^{(-k)} \supset \mathcal{H}^{(-k-1)}$ of the Hodge filtration and the corresponding components of a co-filtration $ \hat{\mathcal{H}}^{(k)} \to \hat{\mathcal{H}}^{(k+1)}...  $ associated to the Gauss-Manin system. The method we explain it here is a some what different method explained in the second reference. It is based on the identification of the complexes 

\[ (PV(X)=\sum PV^{i,j}(X),\partial, \bar{\partial}) \qquad \leftrightarrows \qquad ( A(X)=\sum A^{i,j}(X),d, \bar{d}) \]
\[ \alpha_{I,J} \partial_Iz \otimes  \partial_J\bar{z},  \qquad \leftrightarrows \qquad \beta_{I,J}dz \wedge d\bar{z} \]

of smooth poly-vector fields on the left, with the space of smooth complex differential forms on $X$. It gives a filtered quasi-isomorphism 

\[ (PV(X)((t)), Q_f=\bar{\partial}_f+t\partial) \to (A(X)((t)), d+t^{-1}df \wedge \bullet) \]

where $Q_f$ is the corresponding coboundary to $d+t^{-1}df \wedge \bullet$ via a specific isomorphism 

\[ PV(X)((t)) \cong A(X)((t)) \] 

In fact setting 

\[ d_f^+:=d+\dfrac{df}{t}\wedge \ , \qquad d_f^-:=td+df \wedge \ \]

the maps 

\[ \Gamma^+:(PV(X)((t)), \Q_f) \cong ((A(X)((t)),d_f^+) , \qquad \Gamma^+:(PV(X)((t)), \Q_f) \cong ((A(X)((t)),d_f^-) \]

are filtered isomorphisms via $F^k PV(X)((t))=t^k.PV(X)[[t]]$, and similarly we may filter the other complex. The natural embedding 

\[ \imath:(PV_c(X)[[t]], Q_f) \hookrightarrow (PV(X)[[t]], Q_f) \]

where $c$ states for compact support, defines a quasi-isomorphism, and if we set

\[ \mathcal{H}_{(0)}^f:=H^*(PV(X)[[t]], Q_f) , \qquad \mathcal{H}^f=\mathcal{H}_{(0)}^f \otimes_{\mathbb{C}[[t]]} \mathbb{C}((t))\]

In fact, we have all the isomorphisms

\[ \mathcal{H}_{(-k)}^f=t^k\mathcal{H}_{(0)}^f=H^*(t^kPV(X)[[t]], Q_f) =H^*(t^k\Omega_X^*[[t]], d_f^-)=\mathcal{H}_f^{(-k)} \]

In this way we obtain a Hodge filtration 

\[ F^k\mathcal{H}_{(0)}^f=\mathcal{H}_{(-k)}^f , \qquad Gr_F^k\mathcal{H}_{(0)}^f=t^k Jac(f) \]

then the trace map

\[ Tr:PV_c(X) \to \mathbb{C} \]

provides a $\mathbb{C}[[t]]$-homomorphism $\widehat{Res}^f$ as 

\[ \mathcal{H}_{(0)}^f \longrightarrow \mathcal{O}_{S,0}[[t]] , \qquad \widehat{Res}^f=\sum_k \widehat{Res}_k^f(\bullet)t^k \]

with $\widehat{Res}_k^f$ the higher residues. Similarly, we obtain the higher residue pairing 

\[ K^f( \ , \ ):\mathcal{H}_{(0)}^f \times \mathcal{H}_{(0)}^f \to \mathcal{O}_{S,0}[[t]], \qquad  K^f( \ , 1 ):=\widehat{Res}^f \]

$\mathcal{H}_{(0)}^f$ will also inherits a connection as

\[ \nabla:\mathcal{H}_{(0)}^f \to t^{-1}.\mathcal{H}_{(0)}^f \otimes \Omega_{S,0}^1 \].

The higher residue $K^f$ defines a duality on $\mathcal{H}_{(0)}^f$. We can use the trace map

\[ PV_c(X)[[t]] \times PV_c(X)[[t]] \to \mathbb{C}[[t]] , \qquad (\alpha_1.v_1(t), \alpha_2.v_2(t)) \mapsto v_1(t)v_2(-t)Tr(\alpha_1,\alpha_2) \]

here the convention $\overline{\alpha.v(t)}=v(-t)\alpha$ is used. We equip $PV_c(X)((t))$ with the symplectic pairing 

\[ \omega(\alpha_1.v_1(t), \alpha_2.v_2(t))=Res_{t=0} v_1(t)v_2(-t)Tr(\alpha_1,\alpha_2) \]

If we have an admissible variation of mixed Hodge structure on a Zariski open subset underlying our MHM on $X \setminus f^{-1}(0)$, the limit Hodge filtration pairs with an opposite filtration $\Phi$ to define a complex variation of MHS. Here by complex we mean we forget about the real structures. In such a case we always can find a decomposition $\mathcal{H}^f=\mathcal{H}_{(0)}^f \oplus \mathcal{L}$, such that $t^{-1}\mathcal{L} \subset \mathcal{L}$. Then we have

\[ K^f(B,B) \subset \mathbb{C}, \qquad K^f(\mathcal{L}, \mathcal{L}) \subset t^{-2}\mathbb{C}[t^{-1}], \qquad \omega(\mathcal{L},\mathcal{L})=0 \]

\begin{theorem} \cite{S2}, \cite{LLS}
Let $s_1,s_2$ be local sections of $\mathcal{H}_{(0)}^f$.
\begin{itemize}
\item $K^f(s_1,s_2)=\overline{K^f(s_2,s_1)}$.
\item $K^f(v(t)s_1,s_2)=K^f(s_1,v(-t)s_2)=v(t)K^f(s_1,s_2)$, $v(t) \in \mathcal{O}_S[[t]]$.
\item $\partial_V.K^f(s_1,s_2)=K^f(\partial_Vs_1,s_2)+K^f(s_1,\partial_Vs_2)$, for any local section of $T_S$.
\item $(t\partial_t+n)K^f(s_1,s_2)=K^f(t\partial_t.s_2,s_1)+K^f(s_1,t \partial_t.s_2)$
\item The induced pairing on 
\[ \mathcal{H}_{(0)}^f/t.\mathcal{H}_{(0)}^f\otimes \mathcal{H}_{(0)}^f/t.\mathcal{H}_{(0)}^f \to \mathbb{C} \]

is the classical Grothendieck residue.

\end{itemize}
\end{theorem}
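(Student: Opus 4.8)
The approach is to verify each of the five properties first at the level of cochains, using the explicit description of $K^f$ through the trace map, and then to check that the resulting identities descend to $Q_f$-cohomology. Two chain-level facts do all the work. First, under the identification $PV(X)\leftrightarrows A(X)$ the operators $\partial,\bar\partial$ correspond (up to the volume twist) to $d,\bar\partial$, and Stokes' theorem gives that $Tr\colon PV_c(X)\to\mathbb{C}$ annihilates $\partial$-exact and $\bar\partial$-exact compactly supported polyvector fields; consequently the bilinear expression $(\alpha_1.v_1(t),\alpha_2.v_2(t))\mapsto v_1(t)v_2(-t)\,Tr(\alpha_1,\alpha_2)$, composed with the quasi-isomorphism $\imath\colon PV_c(X)[[t]]\hookrightarrow PV(X)[[t]]$, kills $Q_f$-coboundaries in either slot and therefore induces a well-defined pairing on $\mathcal{H}_{(0)}^f$, namely $K^f$. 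Second, the trace product $Tr(\alpha_1,\alpha_2)$ is graded symmetric in its two arguments up to the Koszul sign forced by the $(n,n)$-degree of the holomorphic volume form, where $n=\dim X$.

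Granting this, $\mathcal{O}_S[[t]]$-sesquilinearity is immediate: replacing $s_1$ by $v(t)s_1$ pulls $v(t)$ out of the formula, while replacing $s_2$ by $v(t)s_2$ pulls out $v(-t)$ because of the convention $\overline{\alpha.w(t)}=w(-t)\alpha$. Hermitian symmetry $K^f(s_1,s_2)=\overline{K^f(s_2,s_1)}$ then follows by writing $s_i=\sum_k\alpha_{i,k}t^k$, expanding $K^f(s_1,s_2)=\sum_{k,l}(-1)^l\,Tr(\alpha_{1,k},\alpha_{2,l})\,t^{k+l}$, swapping the two arguments of $Tr$ (absorbing the Koszul sign), re-indexing $k\leftrightarrow l$, and identifying the outcome with the effect of $t\mapsto-t$ (the bar on $\mathbb{C}((t))$) applied to $K^f(s_2,s_1)$. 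The Leibniz rule for a base field $V\in T_S$ reduces to the Leibniz rule for $\partial_V$ acting on the family trace, together with the fact that the polar part $t^{-1}(\partial_Vf)$ of the Gauss--Manin connection $\nabla_V$ is multiplication by a function, hence self-adjoint for $Tr$: its two contributions enter the two slots with opposite coefficients $t^{-1}$ and $(-t)^{-1}$ and cancel, leaving exactly $\partial_VK^f(s_1,s_2)=K^f(\partial_Vs_1,s_2)+K^f(s_1,\partial_Vs_2)$, once one recalls that, by construction of the connection, $\nabla_V$ of a $Q_f$-cocycle represents the connection on $\mathcal{H}_{(0)}^f$.

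The identity $(t\partial_t+n)K^f(s_1,s_2)=K^f(t\partial_t.s_1,s_2)+K^f(s_1,t\partial_t.s_2)$ is the delicate one, and I expect it to be the main obstacle. Here $t\partial_t$ must be read as the connection operator $\nabla_{t\partial_t}$, not the naive action on $\mathbb{C}[[t]]$, and the shift by $n=\dim X$ is exactly the homogeneity weight of the volume form entering $Tr$, so that $t\partial_t$ applied to $Tr(\alpha_1,\alpha_2)$ produces an extra $n$ beyond the sum of the slot-wise weights; making this precise is cleanest after transporting the whole picture to the de Rham side via the filtered isomorphisms $\Gamma^{\pm}$, where the pairing becomes the Poincar\'e/Serre duality pairing between $(A(X)((t)),d_f^+)$ and its compactly supported $d_f^-$-companion and the degree bookkeeping is manifest; the relation then drops out of the commutation of $t\partial_t$ with $d_f^+$, $d_f^-$ and the top degree of the forms being paired. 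Finally, reduction modulo $t$ gives $K^f(s_1,s_2)\equiv Tr(\alpha_{1,0},\alpha_{2,0})$ with $\alpha_{i,0}\in\mathcal{H}_{(0)}^f/t\,\mathcal{H}_{(0)}^f\cong Jac(f)$, and the claim that this is the classical Grothendieck residue pairing is precisely local duality for the Jacobian algebra: $Tr$ restricted to $PV_c^{n,n}(X)$ induces the Grothendieck residue symbol, the standard computation of the Serre-duality trace for the zero-dimensional complete intersection $\{df=0\}$. Assembling the five verifications completes the proof; for the full details one refers to \cite{S2}, \cite{LLS}. $\square$
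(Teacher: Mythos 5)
The paper does not actually prove this statement: Theorem 10.1 is quoted verbatim from \cite{S2} and \cite{LLS}, and the surrounding text only uses it as input for the later extension arguments. So there is no in-paper proof to compare against; your sketch has to be judged against the argument in the cited sources, and it does follow the same strategy as \cite{LLS} (define $K^f$ at the cochain level via the trace on compactly supported polyvector fields twisted by $t\mapsto -t$ in the second slot, check the five properties there, and descend to $Q_f$-cohomology). Your treatment of sesquilinearity, hermitian symmetry, the base-direction Leibniz rule (with the cancellation of the polar parts $t^{-1}(\partial_V f)$ and $(-t)^{-1}(\partial_V f)$), and the reduction mod $t$ to the Grothendieck residue via local duality for $Jac(f)$ are all consistent with that source. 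You also silently and correctly repair the typo in the fourth bullet, where the paper writes $K^f(t\partial_t.s_2,s_1)$ for what should be $K^f(t\partial_t.s_1,s_2)$.

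One step is genuinely underargued: the descent to cohomology. You invoke Stokes' theorem to say that $Tr$ kills $\partial$-exact and $\bar\partial$-exact terms and conclude that the pairing kills $Q_f$-coboundaries, but $Q_f=\bar\partial_f+t\partial$ with $\bar\partial_f=\bar\partial+\iota_{df}$, and the contraction $\iota_{df}$ is an algebraic (order-zero) operator to which Stokes does not apply. What is actually needed is the pair of adjointness identities $Tr\bigl((\bar\partial_f\alpha)\wedge\beta\bigr)=\pm Tr\bigl(\alpha\wedge\bar\partial_f\beta\bigr)$ and $Tr\bigl((\partial\alpha)\wedge\beta\bigr)=\mp Tr\bigl(\alpha\wedge\partial\beta\bigr)$, where the first uses that $\iota_{df}$ is a graded self-adjoint derivation for the wedge-and-trace pairing; only then does the sign flip $t\mapsto -t$ in the second argument make the two halves of $Q_f$ adjoint to each other with compatible signs, so that coboundaries pair to zero. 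Without the $\iota_{df}$ part the well-definedness claim, and hence everything downstream, does not follow. Your handling of the $(t\partial_t+n)$ identity is honestly flagged as a sketch; the weight $n$ does indeed come from the homogeneity of the holomorphic volume form in $Tr$, and transporting to the de Rham side via $\Gamma^{\pm}$ is a legitimate way to organize the bookkeeping, but as written it is a plan rather than a proof.
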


\section{Application to Neron models of PVHS}

Let $(\mathcal{H},F)$ be a variation of Hodge structure. We are interested to family of intermediate Jacobians 

\[ J(H_s)=H_{s,\mathbb{Z}} \setminus H_{s,\mathbb{C}}/F^pH_{s,\mathbb{C}}=\text{Ext}_{MHM}^1(\mathbb{Z},H_s) \]

\[ J(\mathcal{H})= \displaystyle{\bigcup_{s \in S^*}J(H_s)} \]

associated to such VMHS, called the Neron model of $\mathcal{H}$ (here we have assumed the weight is 2p-1). The sections of the bundle $J(\mathcal{H})$ are called Normal functions. Define 

\[ \text{NF}(S^*,\mathcal{H})_S^{\text{ad}}:=\text{Ext}^1(\mathbb{Z}_{S^*},\mathcal{H}) \]

called admissible normal functions, where $\text{Ext}$ is taken in the category of ${\text{VMHS}(S^*)_S^{\text{ad}}}$ the category of admissible variation of mixed Hodge structures, \cite{SAI8}.

To extend $J(\mathcal{H})$ to a space over $S$, we let $M$ be the polarized Hodge module on $S^*$, obtained from the variation $\mathcal{H}$. On $S^*$ we have an extension sequence 

\[ 0 \to \mathcal{H} \to \mathcal{J} \to \mathbb{Z} \to 0 \]

and therefore an extension

\[ 0 \to M \to N \to \mathbb{Q}_S^H[n] \to 0 \]

with $\mathbb{Q}_S^H[n]$ the trivial Hodge module 
of weight $n$ on $S$. Dualizing the extension and applying a Tate twist, we also have

\[ 0  \to \mathbb{Q}_S^H[n]  \to N_{\nu}^{\vee} \to M(-1) \to 0 \]

with $ N^{\vee}=\mathbb{D}(N)(-n)$.

\begin{example} \cite{SCH2}
Consider the trivial family of Elliptic curves $E \times \Delta^*$ where $E=\mathbb{C}/(\mathbb{Z}+\tau. \mathbb{Z})$, has an automorphism of order 6. Consider the trivial family $E \times \Delta^*$, as well as its quotient by $\mathbb{Z}/6. \mathbb{Z}$. We denote the local system corresponding to the quotient by 
$\mathcal{H}$. By choosing the bases, the monodromy on the cohomology takes the form;

\begin{center}
$  T=\left( 
\begin{array}{cc}
0  &  1\\
-1 &   1
\end{array} \right)$
\end{center}

with eigen-values $\tau$ and $\bar{\tau}$. The Deligne extension is given by $\mathcal{O}e_1 \oplus \mathcal{O}e_2$ with connection defined by

\[ \nabla e_1=-e_1 \otimes \dfrac{ds}{6s}, \qquad \nabla e_2=-e_2 \otimes \dfrac{5ds}{6s} \] 

Admissibility condition can be tested by pulling back along the branch cover $s=t^6$ to make the monodromy unipotent. If we only need to consider the family $E \times \Delta^*$, there is a map $g:\Delta \to \mathbb{C}$ such that,

\[ g(\tau.t)-\tau .g(t) \in \mathbb{Z}+\tau.\mathbb{Z} \]

because the normal function is pulled back from the original family, $g$ may be chosen so that $g(\tau .t)-\tau .g(t) =0$, this choice of $g$ represents the pull back of the extended normal function, its value over $0$ is $g(0)=0$. Thus the pull back of any admissible normal function over $E \times \Delta$ go through the origin. 

\end{example}

We have the commutative diagram of non-degenerate bilinear forms,

\begin{center}
$\begin{array}[c]{cccccc}
K: & M & \otimes & M & \rightarrow & \mathbb{C}[t,t^{-1}]\\
& \downarrow &  & \downarrow &  & \\
K_J: & N & \otimes & N & \rightarrow & \mathbb{C}[t,t^{-1}]\\
& \downarrow &  & \downarrow &  & \\
\times : & \mathbb{Q}_{S}^H & \otimes & \mathbb{Q}_{S}^H & \rightarrow & \mathbb{C}[t,t^{-1}]\\ 
\end{array}$
\end{center}

where the map in the first line is the polarization of the mixed Hodge module $M$, the third map is the product map and the middle one is a descent of the map $S$ on the Neron model. 

\begin{example}
We give an example of a degenerating Neron model for Jacobian bundles, to provide some picture of the construction, and leave more details for further studies. 
The example is taken from \cite{SCH2}, page 52 and belongs to M. Saito. Lets remark that there exists different notions of extensions for Jacobian bundles. In this example we only describe its construction over a Deligne extension. The minimal extension process is left to the reader as above. Let $H_{\mathbb{Z}}=\mathbb{Z}^4$, with $\mathbb{R}$-split Hodge structure given by $I^{1,-1}\oplus I^{-1,1} \oplus I^{0,2} \oplus I^{2,0}$, and $S$ be given by

\[ Q=\left( 
\begin{array}{cccc}
 0 & 0  & 1 & 0 \\
 0 & 0  &  0 &  1\\
1 & 0  &  0 &  0 \\
 0 &  1  & 0 & 0 
\end{array} \right)\]

\noindent
and nilpotent operator

\[ N_1=N_2=\left( 
\begin{array}{cccc}
 0 & 0  &  1 & 0 \\
 0 & 0  &  0 &  1  \\
 0 & 0  &  0 &  0  \\
 0 & 0  &  0 &  0  
\end{array} \right).\]

\noindent
Let $\omega \in \mathbb{C}$ have $Im(\omega) \ne 0$. If the mixed Hodge structure be split over $\mathbb{Z}$, we may set

\[ I^{1,-1}= \mathbb{C} \left( 
\begin{array}{c}
 0 \\
 0 \\
 1 \\
 \omega   
\end{array} \right), \qquad I^{1,-1}= \mathbb{C} \left( 
\begin{array}{c}
 0 \\
 0 \\
 1 \\
 \bar{\omega}   
\end{array} \right), \qquad I^{1,-1}= \mathbb{C} \left( 
\begin{array}{c}
 1 \\
 \omega \\
 0 \\
 0   
\end{array} \right), \qquad I^{1,-1}= \mathbb{C} \left( 
\begin{array}{c}
 1 \\
 \bar{\omega} \\
 0 \\
 0   
\end{array} \right). \]

\noindent
These data define an $\mathbb{R}$-split nilpotent orbit on $(\Delta^*)^2$, by the rule $(z_1,z_2) \to e^{z_1N_1+z_2N_2}F$, where $F$ is given by $I^{p,q}$. it is a pull back of a nilpotent orbit on $(\Delta^*)^2$ by the map $(z_1,z_2) \mapsto z_1z_2$. $F^0$ on the Deligne extension is spanned by

\[ e_0=  \left( 
\begin{array}{c}
 0 \\
 0 \\
 1 \\
 \omega   
\end{array} \right), \qquad e_1= \dfrac{1}{s_1} \left( 
\begin{array}{c}
 1 \\
 \omega \\
 0 \\
 0  
\end{array} \right), \qquad e_1= \dfrac{1}{s_2}\left( 
\begin{array}{c}
 1 \\
 \omega \\
 0 \\
 0   
\end{array} \right). 
\] 

\noindent
It has a presentation as

\[ \mathcal{O} \stackrel{\left( 
\begin{array}{c}
 0\\
 -s_1 \\
 s_2   
\end{array} \right) }{\longrightarrow} \mathcal{O}^3 \to F^0 \to 0. 
\]

Thus, $F^0$ is the subset of $\Delta^2 \times \mathbb{C}^3$ given by the equation $s_1v_1=s_2v_2$, using the coordinate $(s_1,s_2,v_1,v_2,v_3)$. Therefore, the Jacobian bundle $T$, is a bundle of rank 2 outsite the origin and has fiber $\mathbb{C}^3$ over $0$. Lets look at the embedding of $T_{\mathbb{Z}}$. If $h \in \mathbb{Z}^4$ is any integral vector, one has

\begin{center}
$S(e_0,e^{z_1N_1+z_2N_2}h)=(z_1+z_2)(h_3+h_4\omega)-(h_1+h_2\omega)$\\[0.2cm]
$S(e_j,e^{z_1N_1+z_2N_2}h)=-(h_3+h_4\omega)/s_j, \qquad j=1,2$.
\end{center}

Then the closure of $T_{\mathbb{Z}}$ is given by 

\[
(e^{2 \pi iz_1},e^{2\pi i z_2}, (z_1+z_2)(h_3+h_4\omega)-(h_1+h_2\omega), -\dfrac{(h_3+h_4\omega)}{e^{2 \pi iz_1}},-\dfrac{(h_3+h_4\omega)}{e^{2 \pi iz_2}}).
\]

Then the Jacobian bundle over $(\Delta^*)^2$ consists of usual intermediate Jacobians. However over $0$ is $J \times \mathbb{C}^2$, and over remaining points with $s_1s_2=0$ is $J \times \mathbb{C}$, where $J=\mathbb{C}/\mathbb{Z}+\mathbb{Z}\omega$ (see the reference for more details).
 
\end{example}

\nocite{*}
\bibliographystyle{plain}

\end{document}